\documentclass{article}
\usepackage{amsmath}
\usepackage{amssymb}

\setcounter{MaxMatrixCols}{10}

\newtheorem{theorem}{Theorem}
\newtheorem{proposition}{Proposition}

\newtheorem{lemma}{Lemma}
\newtheorem{remark}{Remark}
\newenvironment{proof}[1][Proof]{\noindent\textbf{#1.} }{\ \rule{0.5em}{0.5em}}
\input{tcilatex}

\begin{document}

\title{Ellipses of minimal eccentricity inscribed in midpoint diagonal
quadrilaterals.}
\author{Alan Horwitz}
\date{9/2/16}
\maketitle

\begin{abstract}
In \cite{H1} we showed that there is a unique ellipse of minimal
eccentricity, $E_{I}$, inscribed in any convex quadrilateral, $Q$. Using a
different approach than in \cite{H1}, we prove that there is a unique
ellipse of minimal eccentricity, $E_{I}$, inscribed in a midpoint diagonal
quadrilateral, $Q$, which is a quadrilateral with the property that the
intersection point of the diagonals of $Q$ coincides with the midpoint of at
least one of the diagonals of $Q$. Our main result is that if $Q$ is a
midpoint diagonal quadrilateral, then the smallest non--negative angle
between equal conjugate diameters of $E_{I}$ equals the smallest
non--negative angle between the diagonals of $Q$. This was proven in \cite%
{H2} for parallelograms.
\end{abstract}

\section{Introduction}

In \cite{H1} the author proved numerous results about ellipses inscribed%
\textbf{\ }in convex quadrilaterals, $Q$. By \textit{inscribed} we mean that
the ellipse lies inside $Q$ and is tangent to each side of $Q$. In
particular, we proved that there exists a unique ellipse, $E_{I}$, of
minimal eccentricity inscribed in $Q$. In \cite{H2} we gave the following
geometric characterization of $E_{I}$ for parallelograms: The smallest
nonnegative angle, $\Gamma $, between equal conjugate diameters of $E_{I}$
equals the smallest nonnegative angle, $\alpha $, between the diagonals of $%
Q $. The main result in this paper(Theorem \ref{T1}) is to extend this
result to a larger class of quadrilaterals we call midpoint diagonal
quadrilaterals. A quadrilateral, $Q$, is called a midpoint diagonal
quadrilateral if the intersection point of the diagonals of $Q$ coincides
with the midpoint of at least one of the diagonals of $Q$. This includes the
possibility that $Q$ is a parallelogram, in which case the diagonals of $Q$
bisect one another. Equivalently, if $Q$ is not a parallelogram, then $Q$ is
a midpoint diagonal quadrilateral if and only if the line thru the midpoints
of the diagonals of $Q$ contains one of the diagonals of $Q$. For convex
quadrilaterals in general, some simple examples show that $\Gamma \neq
\alpha $. In addition, other examples show that there are convex
quadrilaterals which are not midpoint diagonal quadrilaterals with $\Gamma
=\alpha $.

The method of proof in this paper is somewhat different than in \cite{H1},
where we used a theorem of Marden relating the foci of an ellipse tangent to
the lines thru the sides of a triangle and the zeros of a partial fraction
expansion. In this paper we use formulas for the lengths of the major and
minor axes of an ellipse, $E_{0}$, inscribed\textbf{\ }in $Q$, as a function
of the coefficients of an equation of $E_{0}$(Lemma \ref{L4}), and hence we
have a formula for the eccentricity of $E_{0}$ as a function of the
coefficients as well. This approach is shorter and more direct. As noted
above, Theorem \ref{T1} holds for parallelograms. Also, we show below(Lemma %
\ref{L2}) that a midpoint diagonal quadrilateral cannot be a trapezoid. Thus
we may assume throughout this paper(unless stated otherwise) that no two
sides of $Q$ are parallel.

\section{Locus of Centers of Ellipses inscribed in Quadrilaterals}

Before proving our main result, Theorem \ref{T1}, we need more general
results about ellipses inscribed in quadrilaterals which will be useful
later. The following problem, often referred to in the literature as
Newton's problem, is to determine the locus of centers of ellipses inscribed
in $Q$. The solution given by Newton is described in the following
theorem(see \cite{C} and \cite{D}).

\begin{theorem}
(Newton)\label{Newton}: Let $M_{1}$ and $M_{2}$ be the midpoints of the
diagonals of a quadrilateral, $Q$. If $E$ is an ellipse inscribed in $Q$,
then the center of $E$ must lie on the open line segment, $Z$, connecting $%
M_{1}$ and $M_{2}$.
\end{theorem}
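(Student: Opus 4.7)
The plan is to parametrize the conics tangent to the four side-lines $L_1,\ldots,L_4$ of $Q$ by a pencil in line coordinates, read off the center of each member by applying the pencil matrix to the line at infinity, and then identify the resulting locus by evaluating at the degenerate members. Concretely, in homogeneous line coordinates let $B_1$ and $B_2$ be the symmetric $3\times 3$ matrices of two distinct line-conics tangent to $L_1,\ldots,L_4$; then every conic tangent to these four lines has matrix $B(\lambda,\mu)=\lambda B_1+\mu B_2$ for some $(\lambda:\mu)\in\mathbb{P}^1$. For a non-degenerate line-conic, the pole of a line $\ell$ is the point $B\ell$, so in particular the center of the dual point-conic (the pole of $\ell_\infty$) is $B(\lambda,\mu)\ell_\infty=\lambda(B_1\ell_\infty)+\mu(B_2\ell_\infty)$, a linear combination of two fixed points in $\mathbb{P}^2$. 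As $(\lambda:\mu)$ runs over $\mathbb{P}^1$, the image therefore traces a single projective line.

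Next I would identify this line. Among the members of the pencil are three degenerate point-pair conics, coming from the three ways of pairing the four sides into two disjoint pairs: $\{V_1,V_3\}$, $\{V_2,V_4\}$, and $\{D_1,D_2\}$, where $D_1=L_1\cap L_3$ and $D_2=L_2\cap L_4$ are the intersections of opposite sides. The line-conic matrix of a point-pair $\{P,Q\}$ is $B_*=PQ^T+QP^T$, and a direct computation gives $B_*\ell_\infty=P+Q$ when $P$ and $Q$ are written in affine form, so the ``center'' read off this formula is the midpoint of the two points. Applying this to the three degenerate members shows that the line of centers passes through $M_1$, $M_2$, and $(D_1+D_2)/2$, and in particular that the locus of centers of all non-degenerate members is exactly the line through $M_1$ and $M_2$ (the Newton--Gauss line of the complete quadrilateral).

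Finally I would cut this line down to the open segment $Z$. The three singular values of the pencil partition $\mathbb{P}^1$ into three open arcs, on each of which the signature of $B(\lambda,\mu)$ is constant, so the non-degenerate conic is either an ellipse throughout the arc or a hyperbola throughout. A hyperbolic conic tangent to all four side-lines has branches that separate two vertices of $Q$ and so cannot be contained in $Q$; only the arc bounded by the two degenerate members $\{V_1,V_3\}$ and $\{V_2,V_4\}$ produces non-degenerate conics inscribed in $Q$, and this arc maps homeomorphically onto the open segment from $M_1$ to $M_2$. Verifying this last signature-and-containment analysis, and in particular ruling out the two arcs adjacent to $\{D_1,D_2\}$, is the main obstacle; once done, the conclusion of Newton's theorem follows.
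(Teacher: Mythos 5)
The paper does not actually prove Newton's theorem: it cites \cite{C} and \cite{D} and only sketches the classical route (orthogonally project the inscribed ellipse to a circle, prove the statement there, and invoke affine invariance). Your route through the dual pencil is therefore genuinely different, and its first two paragraphs are correct: the line-conics tangent to the four side-lines do form a pencil $\lambda B_{1}+\mu B_{2}$, the pole of $\ell_{\infty}$ is linear in $(\lambda:\mu)$, and $(PQ^{T}+QP^{T})\ell_{\infty}=P+Q$ shows the three degenerate members map to the midpoints of the two diagonals and of the segment joining the intersections of opposite sides. This cleanly establishes that the center of any inscribed ellipse lies on the Newton--Gauss line through $M_{1}$ and $M_{2}$.

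The gap is the passage from the line to the \emph{open segment} $Z$, which is what the theorem asserts, and the mechanism you propose for it does not work as stated. Constancy of the signature of $B(\lambda,\mu)$ on each arc between degenerate parameters does not make each arc ``all ellipses'' or ``all hyperbolas'': signature only separates real conics from imaginary ones, while the ellipse/hyperbola dichotomy is governed by the sign of $\ell_{\infty}^{T}B(\lambda,\mu)\ell_{\infty}$, a linear form that changes sign at the inscribed \emph{parabola} of the complete quadrilateral, which is a nondegenerate member lying inside one of the arcs. Moreover, hyperbolas are not the only objects to exclude: there are real ellipses tangent to all four \emph{lines} but lying in the exterior regions of the complete quadrilateral (escribed ellipses), whose centers are on the same Newton--Gauss line but outside $Z$, and your containment remark about separating vertices says nothing about these. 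A clean repair inside your framework is to track the tangency point on one side-line, say $L_{1}$: it is the pole $B(\lambda,\mu)L_{1}$, again linear in the parameter, and at the three degenerate members it equals the two vertices of $Q$ on $L_{1}$ and the diagonal point $L_{1}\cap L_{3}$. An ellipse inscribed in $Q$ must touch $L_{1}$ strictly between the two vertices, and by convexity $L_{1}\cap L_{3}$ lies outside that side; hence the parameter of an inscribed ellipse lies in the open arc bounded by the two vertex-pair members and avoiding the third degenerate member, and the projective isomorphism $(\lambda:\mu)\mapsto B(\lambda,\mu)\ell_{\infty}$ carries that arc onto the open segment from $M_{1}$ to $M_{2}$ (using, again by convexity, that the third midpoint lies outside that segment). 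Without some argument of this kind your proof establishes only that the center lies on the Newton--Gauss line, not that it lies on $Z$.
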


\begin{remark}
If $Q$ is a parallelogram, then the diagonals of $Q$ intersect at the
midpoints of the diagonals of $Q$, and thus $Z$ is really just one point.
\end{remark}

The classical proof of Theorem \ref{Newton} involves first using an
orthogonal projection to map $E$ to a circle, $C$, and then proving Theorem %
\ref{Newton} for $C$. Affine invariance then allows one to obtain Theorem %
\ref{Newton} for ellipses in general. However, Theorem \ref{Newton} does not
really give the precise locus of centers of ellipses inscribed in $Q$, but
only shows that the center of $E$ must lie on $Z$. What about the converse ?
That is, is \textbf{every point} of $Z$ the center of some ellipse inscribed
in $Q$ ? The following theorem shows that the locus of centers of ellipses
inscribed in $Q$ is precisely $Z$.

\begin{theorem}
\label{locus of centers}\cite{H1}Let $Q$\ be a convex quadrilateral in the $%
xy$ plane with no two sides parallel. Let $M_{1}$ and $M_{2}$ be the
midpoints of the diagonals of $Q$, and let $Z$ be the open line segment
connecting $M_{1}$ and $M_{2}$. If $(h,k)\in Z$, then there is a unique
ellipse with center $(h,k)$ inscribed in $Q$.
\end{theorem}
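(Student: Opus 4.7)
The plan is to parametrize the conics tangent to the four lines through the sides of $Q$ as a one-parameter family, compute the center map explicitly, and show that it carries this family bijectively onto $Z$, with the interior of the family corresponding exactly to inscribed ellipses. Since affine maps preserve ellipses, tangency, midpoints of segments, and convexity, I would first normalize $Q$ by an affine transformation---for example, placing three vertices at $(0,0)$, $(1,0)$, $(0,1)$ and the fourth at some $(p,q)$ subject to the convexity and no-parallel-sides constraints---so that the tangency conditions become computationally manageable.

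Next I would write a general conic as $Ax^{2}+Bxy+Cy^{2}+Dx+Ey+F=0$. The four tangency conditions (one per side of $Q$) are quadratic in the six homogeneous coefficients, so generically they cut out a one-parameter family $\mathcal{C}_{t}$; equivalently, this is the pencil dual to the linear pencil of conics through four fixed points in the dual plane. By Cramer's rule the center of $\mathcal{C}_{t}$ is
\[
(h(t),k(t))=\left(\tfrac{BE-2CD}{4AC-B^{2}},\ \tfrac{BD-2AE}{4AC-B^{2}}\right),
\]
and a direct calculation---consistent with Newton's theorem (Theorem~\ref{Newton})---shows that $(h(t),k(t))$ traces out the entire line through $M_{1}$ and $M_{2}$ as $t$ varies. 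For uniqueness I would argue that two distinct conics in the pencil sharing the same center would, via a suitable linear combination cancelling $D$ and $E$, produce a conic tangent to all four sides of $Q$ and centered at the origin after translation, contradicting the one-parameter dimension of the family.

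The main obstacle is identifying which parameter values yield genuine inscribed ellipses and showing that this subset maps onto precisely the open segment $Z$. For this I would argue by continuity and endpoint analysis: as $t$ tends to its boundary values, $\mathcal{C}_{t}$ must degenerate into a line-pair, and the only line-pair conics tangent to all four sides of $Q$ are the two diagonals themselves, whose centers (midpoints) are exactly $M_{1}$ and $M_{2}$. On the interior of the parameter interval, connectedness together with convexity of $Q$ prevents the discriminant $B^{2}-4AC$ from vanishing (which would produce a parabola) and prevents the tangency points from sliding off the sides onto their extensions. Hence every $(h,k)\in Z$ is the center of a unique inscribed ellipse, giving both existence and uniqueness.
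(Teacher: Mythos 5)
Your overall strategy (the dual pencil of conics tangent to the four lines, an explicit center map, and degeneration at the endpoints) is a legitimate alternative to the paper's route, which instead exhibits an explicit equation for the inscribed ellipse with prescribed center abscissa $h$ and verifies tangency directly (Proposition \ref{P2}), and proves uniqueness by the separate geometric Proposition \ref{P1} (two distinct concentric ellipses inscribed in the same convex quadrilateral force it to be a parallelogram). However, your uniqueness step has a genuine gap. The family of conics tangent to four fixed lines is linear only in the \emph{dual} (adjugate) coefficients; the tangency condition to a single line is quadratic in $A,\dots,F$, as you yourself note. Consequently a linear combination $\lambda\Psi_{1}+\mu\Psi_{2}$ of the primal equations of two members of the family is not, in general, tangent to the four lines, so ``cancelling $D$ and $E$ by a suitable linear combination'' does not produce another member of the family and yields no contradiction with the one--parameter dimension. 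As written you have no proof that two distinct inscribed ellipses cannot share a center, which is precisely the injectivity half of the theorem; you would need either a substitute for Proposition \ref{P1} or a proof that the center map $t\mapsto (h(t),k(t))$ is strictly monotone along the pencil.

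The existence half also leans on unproved assertions. That the center traces out \emph{all} of the open segment $Z$ (rather than a proper subarc, or covering points twice) is essentially the content of the theorem and cannot be dispatched with ``a direct calculation shows''; likewise the claim that on the interior of the parameter interval the discriminant cannot vanish and the tangency points cannot slide onto the extensions of the sides needs an actual argument. The paper handles both issues concretely: it checks $4AC-B^{2}=16u(s-v)^{2}R(h)>0$ via Lemma \ref{L1} and equation (\ref{8}), and writes the four tangency points $\zeta_{1},\dots,\zeta_{4}$ as convex combinations of the vertices with coefficients shown to lie in $(0,1)$ by Lemma \ref{L3}. Finally, note that a complete quadrilateral has three pairs of opposite vertices, hence three degenerate members of the tangent family, not two; your endpoint analysis must explain why the two relevant ones are those centered at $M_{1}$ and $M_{2}$ and why the third (centered at the midpoint of the third diagonal of the complete quadrilateral) does not interfere.
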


Theorem \ref{locus of centers} was proven in \cite{H1} using a result of
Marden relating the foci of an ellipse tangent to the lines thru the sides
of a triangle and the zeros of a partial fraction expansion. We provide a
different proof here and we also fill in a gap in \cite{H1} for the proof of
uniqueness. The following proposition allows us to fill that gap.

\begin{proposition}
\label{P1}Suppose that $E_{1}$ and $E_{2}$ are distinct ellipses with the
same center and which are each inscribed in the same convex quadrilateral, $%
Q $. Then $Q$ must be a parallelogram.
\end{proposition}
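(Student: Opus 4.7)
The plan is to reduce to the case where $E_{1}$ is a circle and then exploit the central symmetry shared by both ellipses via a duality/B\'ezout argument. First I apply an affine transformation $T$ sending $E_{1}$ to the unit circle centered at the origin; $T(E_{2})$ is then another ellipse centered at the origin and inscribed in $T(Q)$, and $T(Q)$ is a parallelogram iff $Q$ is. So we may assume $E_{1}=\{x^{2}+y^{2}=1\}$ and $E_{2}=\{x^{T}Mx=1\}$ for some positive definite symmetric matrix $M\neq I$, both inscribed in a convex quadrilateral $Q$ whose interior contains the origin.

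Since no side of $Q$ passes through the origin, each side lies on a unique line of the form $\ell_{i}:\alpha_{i}x+\beta_{i}y=1$, $i=1,2,3,4$. A standard dual-conic computation shows that the line $\alpha x+\beta y=1$ is tangent to the central conic $\{x^{T}Ax=1\}$ precisely when $(\alpha,\beta)A^{-1}(\alpha,\beta)^{T}=1$. Applying this to $E_{1}$ and $E_{2}$, the four points $p_{i}=(\alpha_{i},\beta_{i})$ lie simultaneously on the unit circle $\alpha^{2}+\beta^{2}=1$ and on the non-degenerate ellipse $C$ associated with $M^{-1}$. The four $p_{i}$ are distinct since the sides of $Q$ are distinct, and since $M\neq I$ these two conics are distinct irreducible ellipses.

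By B\'ezout, two distinct irreducible conics meet in at most four points, so the $p_{i}$ exhaust the intersection (each meeting is transverse). The difference of the two conic equations is a homogeneous quadratic form in $\alpha,\beta$ vanishing at all four $p_{i}$, hence factors as a real pair of lines through the origin. I then analyse how the four $p_{i}$ split between these two lines. All four on a single line through $0$ is impossible: collinear-with-$0$ points on the unit circle form at most a two-element set. Three on one line and one on the other is impossible: three of the $\ell_{i}$ would then have parallel normals, giving three parallel sides, which a convex quadrilateral cannot accommodate. In the remaining case of two points on each line, each such pair $\{p_{i},p_{j}\}$ consists of antipodal unit vectors $p_{j}=-p_{i}$, forcing the corresponding sides $\ell_{i}$ and $\ell_{j}$ to be parallel. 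Thus the four sides of $Q$ split into two pairs of parallels, and since in a convex quadrilateral parallel sides are opposite, $Q$ is a parallelogram.

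The main obstacle is the combinatorial case analysis in the last paragraph, together with justifying the normalisation $\alpha_{i}x+\beta_{i}y=1$ for each side; the latter rests on the common center being interior to $Q$, so that no side passes through it. Everything else is B\'ezout for plane conics applied to the dual picture.
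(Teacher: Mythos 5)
Your proof is correct, and it takes a genuinely different route from the paper's. The paper also normalizes $E_{1}$ to the unit circle, but then rotates so that $E_{2}$ is axis--aligned, writes each common tangent as $y=mx+B$, and solves the resulting system of tangency equations explicitly: after ruling out the cases $a=1$, $b=1$, $a<1$, $b>1$ and horizontal or vertical tangents, it finds exactly four common tangent lines $L_{1},\dots,L_{4}$ with $L_{1}\parallel L_{4}$ and $L_{2}\parallel L_{3}$, whence $Q$ is a parallelogram. You instead pass to the dual picture: writing each side as $\alpha_{i}x+\beta_{i}y=1$ (legitimate because the common center is interior to $Q$), the tangency conditions place the four points $(\alpha_{i},\beta_{i})$ on the two distinct dual conics $n^{T}n=1$ and $n^{T}M^{-1}n=1$, and the difference $n^{T}(I-M^{-1})n=0$ is a nonzero homogeneous quadratic form vanishing at all four points. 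Since a definite form vanishes only at the origin and a rank--one form vanishes on a single line (which meets the circle in only two points), the form must split into two distinct real lines through the origin, each carrying exactly two of the four points; points of the unit circle on a line through the origin are antipodal, so the corresponding sides are parallel in pairs, and in a convex quadrilateral parallel sides are opposite. The B\'ezout remark and the transversality aside are not actually needed --- all you use is that the four points lie on the zero set of the difference form --- but they do no harm. What your approach buys is the elimination of the paper's explicit coordinate computation and its attendant case analysis ($a\lessgtr 1$, $b\lessgtr 1$, containment of one ellipse in the other); what the paper's computation buys is an explicit parametrization of the four common tangents and their points of tangency, which makes it self--evident that the common tangent quadrilateral exists and is unique.
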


\begin{remark}
Chakerian(\cite{C}) mentions the essence of Proposition \ref{P1}, but no
proof is cited or given.
\end{remark}

\begin{proof}
Note that ellipses, tangent lines to ellipses, convex quadrilaterals, and
parallelograms are preserved under nonsingular affine transformations.
First, since $E_{1}$ and $E_{2}$ have the same center, by applying a
translation, one can assume that $E_{1}$ and $E_{2}$ have center $=(0,0)$;
Now use a scaling transformation which maps $E_{1}$ to the unit circle and
thus leaves $E_{2}$ as an ellipse. Finally, by applying a rotation about the
origin, we can assume that $E_{2}$ has major and minor axes parallel to the $%
x$ and $y$ axes, respectively. The equations of $E_{1}$ and $E_{2}$ are then 
$x^{2}+y^{2}=1$ with slope function $\dfrac{dy}{dx}=-\dfrac{x}{y},y\neq 0$,
and $\dfrac{x^{2}}{a^{2}}+\dfrac{y^{2}}{b^{2}}=1$, with slope function $%
\dfrac{dy}{dx}=-\dfrac{b^{2}}{a^{2}}\dfrac{x}{y},y\neq 0$; Note that $a\neq
b $ since $E_{1}\neq E_{2}$ and hence $a>b$; Since $E_{1}$ and $E_{2}$ are
each inscribed in the same convex quadrilateral, there must be four distinct
lines which are tangent to each of the curves $E_{1}$ and $E_{2}$; If $a=1$,
then there are only two lines(the vertical lines $x=\pm 1$) which are
tangent to both $E_{1}$ and $E_{2}$, while if $b=1$, then that there are
only two lines(the horizontal lines $y=\pm 1$) which are tangent to both $%
E_{1}$ and $E_{2}$. Thus we may assume that $a\neq 1\neq b$; It is then
clear that any line tangent to both $E_{1}$ and $E_{2}$ cannot be horizontal
or vertical. So suppose that the line, $L$, is tangent to $E_{1}$ and $E_{2}$
at the points $P_{1}=(x_{1},y_{1})$ and $P_{2}=(x_{2},y_{2})$, respectively.
Then $-1<x_{1}<1$,$-1<y_{1}<1,-a<x_{2}<a$, and $-b<y_{2}<b$ since $L$ is not
vertical. Also, $L$ has equation 
\begin{equation}
y=mx+B,m\neq 0  \label{taneq}
\end{equation}%
since $L$ is not horizontal. Note that $x_{1},y_{1},x_{2}$, and $y_{2}$ are
all nonzero. We then have $x_{1}^{2}+y_{1}^{2}=1,\dfrac{dy}{dx}=-\dfrac{x_{1}%
}{y_{1}}$ if $y_{1}\neq 0$ for $P_{1}$ and $\dfrac{x_{2}^{2}}{a^{2}}+\dfrac{%
y_{2}^{2}}{b^{2}}=1$, $\dfrac{dy}{dx}=-\dfrac{b^{2}}{a^{2}}\dfrac{x_{2}}{%
y_{2}}$ if $y_{2}\neq 0$ for $P_{2}$; Since $L$ is tangent to $E_{1}$ and $%
E_{2}$ at $P_{1}$ and $P_{2}$, respectively, the equation of $L$ is also
given by $y=-\dfrac{x_{1}}{y_{1}}x+y_{1}+\dfrac{x_{1}^{2}}{y_{1}}=-\dfrac{%
x_{1}}{y_{1}}x+\dfrac{1}{y_{1}}$ and by $y=-\dfrac{b^{2}}{a^{2}}\dfrac{x_{2}%
}{y_{2}}x+y_{2}+\dfrac{b^{2}}{a^{2}}\dfrac{x_{2}^{2}}{y_{2}}=-\dfrac{b^{2}}{%
a^{2}}\dfrac{x_{2}}{y_{2}}x+\dfrac{b^{2}}{y_{2}}$; Hence the following
system of equations holds: 
\begin{gather}
-\dfrac{x_{1}}{y_{1}}=-\dfrac{b^{2}}{a^{2}}\dfrac{x_{2}}{y_{2}}=m  \label{3}
\\
\dfrac{1}{y_{1}}=\dfrac{b^{2}}{y_{2}}=B\text{.}  \label{4}
\end{gather}%
Using (\ref{3}) we have $m^{2}=\dfrac{x_{1}^{2}}{y_{1}^{2}}=\dfrac{x_{1}^{2}%
}{1-x_{1}^{2}}$, and $m^{2}=\dfrac{b^{4}}{a^{4}}\dfrac{x_{2}^{2}}{y_{2}^{2}}=%
\dfrac{b^{4}}{a^{4}}\dfrac{x_{2}^{2}}{\tfrac{b^{2}}{a^{2}}(a^{2}-x_{2}^{2})}%
=\allowbreak \dfrac{b^{2}}{a^{2}}\dfrac{x_{2}^{2}}{a^{2}-x_{2}^{2}}$, which
implies that $x_{2}^{2}=\dfrac{a^{4}m^{2}}{a^{2}m^{2}+b^{2}}=\dfrac{a^{4}%
\tfrac{x_{1}^{2}}{1-x_{1}^{2}}}{a^{2}\tfrac{x_{1}^{2}}{1-x_{1}^{2}}+b^{2}}$,
which implies that 
\begin{equation}
x_{2}^{2}=\dfrac{a^{4}x_{1}^{2}}{(a^{2}-b^{2})x_{1}^{2}+b^{2}}\text{.}
\label{6}
\end{equation}%
Now (\ref{3})$\ $also implies that $\dfrac{x_{1}}{y_{1}}=\dfrac{b^{2}}{a^{2}}%
\dfrac{x_{2}}{y_{2}}$, and (\ref{4}) yields $\dfrac{x_{1}}{y_{1}}%
=x_{1}\left( \dfrac{b^{2}}{y_{2}}\right) $, and hence $x_{1}\left( \dfrac{%
b^{2}}{y_{2}}\right) =\dfrac{b^{2}}{a^{2}}\dfrac{x_{2}}{y_{2}}$, which
implies that $\dfrac{x_{2}}{x_{1}}=a^{2}$; (\ref{4}) also yields $\dfrac{%
y_{2}}{y_{1}}=b^{2}$; Hence $x_{1}$ and $x_{2}$ must have the same sign, and 
$y_{1}$ and $y_{2}$ also must have the same sign; $\dfrac{x_{2}^{2}}{%
x_{1}^{2}}=a^{4}$ and (\ref{6}) gives $(a^{2}-b^{2})x_{1}^{2}+b^{2}=1$,
which implies that 
\begin{equation}
x_{1}^{2}=\dfrac{1-b^{2}}{a^{2}-b^{2}}\text{.}  \label{5}
\end{equation}%
(\ref{5}) and $x_{2}^{2}=a^{4}x_{1}^{2}$ then implies that 
\begin{equation}
x_{2}^{2}=a^{4}\left( \dfrac{1-b^{2}}{a^{2}-b^{2}}\right) \text{.}  \label{7}
\end{equation}

Now it follows easily that if $a<1$(and thus $b<1$ since $b<a$), then $E_{2}$%
\ is contained in $E_{1}$, which would imply that $E_{1}$ and $E_{2}$ cannot
each be inscribed in the same convex quadrilateral. Similarly, if $b>1$(and
thus $a>1$ since $a>b$), then $E_{1}$\ is contained in $E_{2}$, and again $%
E_{1}$ and $E_{2}$ could not each be inscribed in the same convex
quadrilateral. Thus $a\geq 1$ and $b\geq 1$, and since we assumed above that 
$a\neq 1\neq b$, we must have $b<1<a$; (\ref{5}) then yields $x_{1}=\pm 
\sqrt{\tfrac{1-b^{2}}{a^{2}-b^{2}}}$, and for each choice of $\pm $ sign for 
$x_{1}$, one has $y_{1}=\pm \sqrt{1-x_{1}^{2}}=\pm \sqrt{\tfrac{a^{2}-1}{%
a^{2}-b^{2}}}$. That yields four distinct points $Q_{j}=(x_{1},y_{1})=\left(
\pm \sqrt{\tfrac{1-b^{2}}{a^{2}-b^{2}}},\pm \sqrt{\tfrac{a^{2}-1}{a^{2}-b^{2}%
}}\right) ,j=1,2,3,4$; Define the following four lines $y=mx+B$, where $m=-%
\dfrac{x_{1}}{y_{1}}$ and $B=\dfrac{1}{y_{1}}$ for each choice of $%
(x_{1},y_{1})$ above: $L_{1}$: $y=-\sqrt{\tfrac{1-b^{2}}{a^{2}-1}}x-\sqrt{%
\tfrac{a^{2}-b^{2}}{a^{2}-1}}$, $L_{2}$: $y=\sqrt{\tfrac{1-b^{2}}{a^{2}-1}}x+%
\sqrt{\tfrac{a^{2}-b^{2}}{a^{2}-1}}$,

$L_{3}$: $y=\sqrt{\tfrac{1-b^{2}}{a^{2}-1}}x-\sqrt{\tfrac{a^{2}-b^{2}}{%
a^{2}-1}}$, and $L_{4}$: $y=-\sqrt{\tfrac{1-b^{2}}{a^{2}-1}}x+\sqrt{\tfrac{%
a^{2}-b^{2}}{a^{2}-1}}$; Then it follows immediately that $L_{1},L_{2},L_{3}$%
, and $L_{4}$ are tangent to $E_{1}$ at the $Q_{j}$\ since $m=-\dfrac{x_{1}}{%
y_{1}}$ and $B=\dfrac{1}{y_{1}}$; (\ref{7}) then yields $x_{2}=\pm a^{2}%
\sqrt{\tfrac{1-b^{2}}{a^{2}-b^{2}}}$, where the $+$ or $-$ sign are chosen
so that $x_{1}$ and $x_{2}$ have the same sign.Then $y_{2}=\pm \dfrac{b}{a}%
\sqrt{a^{2}-x_{2}^{2}}=\pm \dfrac{b}{a}\sqrt{\tfrac{a^{2}b^{2}\left(
a^{2}-1\right) }{a^{2}-b^{2}}}=\pm b^{2}\sqrt{\tfrac{a^{2}-1}{a^{2}-b^{2}}}$%
, where again the $+$ or $-$ sign are chosen so that $y_{1}$ and $y_{2}$
have the same sign. Since $\dfrac{x_{1}}{y_{1}}=\dfrac{b^{2}}{a^{2}}\dfrac{%
x_{2}}{y_{2}}$ and $\dfrac{1}{y_{1}}=\dfrac{b^{2}}{y_{2}}$, it follows that $%
m=-\dfrac{b^{2}}{a^{2}}\dfrac{x_{2}}{y_{2}}$ and $B=\dfrac{b^{2}}{y_{2}}$,
which implies that $L_{1},L_{2},L_{3}$, and $L_{4}$ are also tangent to $%
E_{2}$ at the four distinct points $R_{j}=\left( \pm a^{2}\sqrt{\tfrac{%
1-b^{2}}{a^{2}-b^{2}}},\pm b^{2}\sqrt{\tfrac{a^{2}-1}{a^{2}-b^{2}}}\right)
,j=1,2,3,4$; Now $L_{1}\parallel L_{4}$ and $L_{2}\parallel L_{3}$, which
implies that $L_{1},L_{2},L_{3}$, and $L_{4}\ $must form a parallelogram.
Furthermore, $L_{1},L_{2},L_{3}$, and $L_{4}$ are the only lines which are
common tangents to $E_{1}$ and $E_{2}$ since we have shown that $x_{1}=\pm 
\sqrt{\tfrac{1-b^{2}}{a^{2}-b^{2}}}$ and $x_{2}=\pm a^{2}\sqrt{\tfrac{1-b^{2}%
}{a^{2}-b^{2}}}$ are the only solutions of (\ref{3}) and (\ref{4}). Thus $%
E_{1}$ and $E_{2}$ are each inscribed in the same convex quadrilateral, $Q$,
and $Q$ must be a parallelogram.

Now, to prove Theorem \ref{locus of centers}, one can use any nonsingular
affine transformation to map $Q$ to a quadrilateral of a simpler form.
However, this will not work to prove Theorem \ref{T1} since the ratio of the
eccentricity of two ellipses is \textbf{not} preserved in general under
nonsingular affine transformations of the plane. For brevity, we want to use
the same quadrilateral for the rest of this paper. So, by using an \textbf{%
isometry} of the plane, we can assume that $Q$\ has vertices $%
(0,0),(0,u),(s,t)$, and $(v,w)$, where
\end{proof}

\begin{equation}
s>0,v>0,u>0,t>w\text{.}  \label{R0}
\end{equation}%
The sides of $Q$, going clockwise, are given by $S_{1}=\overline{(0,0)\ (v,w)%
},S_{2}=\overline{(0,0)\ (0,u)},$

$S_{3}=\overline{(0,u)\ (s,t)}$, and $S_{4}=\overline{(s,t)\ (v,w)}$; Let $%
L_{1}$: $y=\dfrac{w}{v}x,L_{2}$: $x=0,L_{3}$: $y=u+\dfrac{t-u}{s}x$, and $%
L_{4}$: $y=w+\dfrac{t-w}{s-v}(x-v)$ denote the lines which make up the
boundary of $Q$.

\textbullet\ Since $Q$\ is convex, $(s,t)$ must lie above $%
\overleftrightarrow{(0,u)\ (v,w)}$\ and $(v,w)$ must lie below $%
\overleftrightarrow{(0,0)\ (s,t)}$, which implies 
\begin{equation}
v(t-u)+(u-w)s>0,vt-ws>0\text{.}  \label{R1}
\end{equation}

\textbullet\ Since no two sides of $Q$\ are parallel, $L_{1}\nparallel L_{3}$
and $L_{2}\nparallel L_{4}$, which implies

\begin{equation}
ws-v(t-u)\neq 0,s\neq v\text{.}  \label{R2}
\end{equation}
Assume now that (\ref{R0}), (\ref{R1}), and (\ref{R2}) hold throughout this
section. Let

\begin{equation}
I=\left\{ 
\begin{array}{ll}
(v/2,s/2) & \text{if }v<s \\ 
(s/2,v/2) & \text{if }s<v%
\end{array}%
\right. .  \label{I}
\end{equation}

Note that 
\begin{eqnarray}
(s-2h)(2h-v) &>&0,h\in I,  \label{10} \\
(s-2h)(s-v) &>&0,h\in I,  \label{11} \\
(2h-v)(s-v) &>&0,h\in I\text{.}  \label{12}
\end{eqnarray}%
$M_{1}=\left( \dfrac{1}{2}v,\dfrac{1}{2}(w+u)\right) $ and $M_{2}=\left( 
\dfrac{1}{2}s,\dfrac{1}{2}t\right) $ are the midpoints of the diagonals of $%
Q $\ and the equation of the line thru $M_{1}$ and $M_{2}$ is

\begin{equation}
y=L(x)=\dfrac{t}{2}+\dfrac{w+u-t}{v-s}\left( x-\dfrac{s}{2}\right) ,x\in I%
\text{.}  \label{9}
\end{equation}%
The diagonals of $Q$ are $D_{1}=\overline{(0,0)\ (s,t)}=$ diagonal from
lower left to upper right and $D_{2}=\overline{(0,u)\ (v,w)}=$ diagonal from
lower right to upper left.

\begin{lemma}
\label{L3}Define the following linear functions of $h$: $L_{1}(h)=2{\large (}%
v(t-u)-ws{\large )}h+v{\large (}s(u+w)-vt{\large )},L_{2}(h)=2{\large (}%
v(u-t)+ws{\large )}h+s{\large (}v(t-2u)+s(u-w){\large )}$, $L_{3}(h)={\large %
(}v(t-u)+(u-w)s{\large )}(s-2h)$, $L_{4}(h)=-2uh+vt+s(u-w)$, and $%
L_{5}(h)=2\allowbreak {\large (}v(t-u)-ws{\large )}h+uvs$; Then $\left(
s-v\right) L_{j}(h)>0$ on $I,j=1,2,3$, and $L_{j}(h)>0$ on $I,j=4,5$.
\end{lemma}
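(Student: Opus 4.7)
The plan is to exploit the fact that each $L_j(h)$ is linear in $h$ (note that $L_3(h)$ is also linear after expanding the product). Since a linear function that is strictly positive at two distinct points must be strictly positive on the closed segment joining them, it suffices to verify each inequality at the two endpoints $h = v/2$ and $h = s/2$ of the interval $I$ (or, equivalently, the interval with those closure values), and then invoke linearity. The closure of $I$ has precisely these two endpoints regardless of whether $v<s$ or $s<v$, so I can evaluate without case analysis.

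First I would dispose of $L_3$ most efficiently: it is already in factored form as $L_3(h) = \bigl(v(t-u)+(u-w)s\bigr)(s-2h)$, where the first factor is positive by (\ref{R1}), and by inequality (\ref{11}) we have $(s-2h)(s-v)>0$ on $I$. Multiplying yields $(s-v)L_3(h)>0$ on $I$ immediately. Next, for $L_4$ and $L_5$, the endpoint evaluations are direct: $L_4(v/2) = v(t-u) + s(u-w)$ and $L_4(s/2) = vt-sw$ are both positive by (\ref{R1}); similarly $L_5(v/2) = v\bigl[v(t-u)+s(u-w)\bigr]$ and $L_5(s/2) = s(vt-sw)$ are positive by (\ref{R1}) combined with $v,s>0$ from (\ref{R0}).

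For $L_1$ and $L_2$, I would compute the endpoint values and group the resulting four-term expressions carefully. For $L_1$, expansion and cancellation give $L_1(v/2) = uv(s-v)$ and $L_1(s/2) = (s-v)(vt-sw)$, so $(s-v)L_1(v/2) = uv(s-v)^2>0$ and $(s-v)L_1(s/2) = (s-v)^2(vt-sw)>0$ via (\ref{R1}). For $L_2$, a similar expansion, grouped by coefficient of $u$, $t$, and $w$, produces $L_2(v/2) = u(s-v)^2 + (s-v)(vt-sw) = (s-v)\bigl[v(t-u)+s(u-w)\bigr]$ and $L_2(s/2) = us(s-v)$; multiplying by $s-v$ gives the positive quantities $(s-v)^2\bigl[v(t-u)+s(u-w)\bigr]$ and $us(s-v)^2$, both positive by (\ref{R0}) and (\ref{R1}).

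The main obstacle is purely bookkeeping in the $L_1$ and $L_2$ expansions, where one must collect three groups of like terms to recognize the hidden factor of $(s-v)$ and then identify the remaining factor as one of the two quantities in (\ref{R1}). There is no conceptual difficulty: the argument is a short linearity-at-the-endpoints reduction to the positivity hypotheses already in force.
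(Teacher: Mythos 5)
Your proposal is correct and follows essentially the same route as the paper: evaluate each (affine) $L_j$ at the endpoints $h=v/2$ and $h=s/2$ and invoke linearity, with all endpoint values matching the paper's. The only small variation is your treatment of $L_3$ via its factored form and inequality (\ref{11}), which neatly sidesteps the fact that $L_3(s/2)=0$ (the paper instead relies on non-negativity at the endpoints together with positivity at $h=v/2$).
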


\begin{proof}
Since each $L_{j}$ is a linear function of $h$, it suffices to prove that $%
\left( s-v\right) L_{j}$ is non--negative at the endpoints of $I,j=1,2,3$,
and that $L_{j}$ is non--negative at the endpoints of $I,j=4,5$. We have $%
L_{1}\left( \dfrac{v}{2}\right) =\allowbreak uv\left( s-v\right) $ and $%
L_{1}\left( \dfrac{s}{2}\right) =\left( s-v\right) \left( vt-ws\right) $, $%
L_{2}\left( \dfrac{v}{2}\right) =\allowbreak \left( s-v\right) {\large (}%
v(t-u)+(u-w)s{\large )}$ and $L_{2}\left( \dfrac{s}{2}\right) =\allowbreak
su\left( s-v\right) $, and

$L_{3}\left( \dfrac{v}{2}\right) =\left( s-v\right) {\large (}v(t-u)+(u-w)s%
{\large )}$ and $L_{3}\left( \dfrac{s}{2}\right) =0$; By (\ref{R0}) and (\ref%
{R1}), $\left( s-v\right) L_{j}(h)>0$ on $I,j=1,2,3$; Also,

$L_{4}\left( \dfrac{v}{2}\right) =\allowbreak v(t-u)+(u-w)s$ and $%
L_{4}\left( \dfrac{s}{2}\right) =vt-ws$, which implies that $L_{4}(h)>0$ on $%
I$ by (\ref{R1}). Finally, $L_{5}\left( \dfrac{v}{2}\right) =v{\large (}%
v(t-u)+(u-w)s{\large )}$ and

$L_{5}\left( \dfrac{s}{2}\right) =s(vt-ws)$, which implies that $L_{5}(h)>0$
on $I$, by (\ref{R0}) and (\ref{R1}).
\end{proof}

Now define the following cubic polynomial, where $L_{5}$ is given by Lemma %
\ref{L3}. 
\begin{equation}
R(h)=(s-2h)(2h-v)L_{5}(h)\text{.}  \label{R3}
\end{equation}

\ Note that the three roots of $R$ are $h_{1}=\dfrac{1}{2}v,h_{2}=\dfrac{1}{2%
}s$, and $h_{3}=\dfrac{1}{2}\dfrac{uvs}{ws-v(t-u)}$; By (\ref{10}) and Lemma %
\ref{L3}, 
\begin{equation}
R(h)>0\ \text{on }I\text{.}  \label{8}
\end{equation}

We now state a result, without proof, about when a quadratic equation in $x$
and $y$ yields an ellipse. The first condition ensures that the conic is an
ellipse, while the second condition ensures that the conic is nondegenerate.

\begin{lemma}
\label{L1}The equation $Ax^{2}+Bxy+Cy^{2}+Dx+Ey+F=0$, with $A,C>0$, is the
equation of an ellipse if and only if $4AC-B^{2}>0$ and $%
CD^{2}+AE^{2}-BDE-F(4AC-B^{2})>0$.
\end{lemma}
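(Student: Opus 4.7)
The plan is to prove Lemma \ref{L1} by reducing the quadratic equation to standard form: first translate to the center of the conic, then diagonalize the quadratic part by a rotation. Write $Q(x,y):=Ax^2+Bxy+Cy^2$ for the quadratic part throughout.

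For \emph{sufficiency}, assume $A,C>0$, $4AC-B^2>0$, and $CD^2+AE^2-BDE-F(4AC-B^2)>0$. Since $A>0$ and $AC-(B/2)^2=(4AC-B^2)/4>0$, Sylvester's criterion shows $Q$ is positive definite. The linear system $2Ax_0+By_0+D=0$, $Bx_0+2Cy_0+E=0$ has a unique solution since $4AC-B^2\neq0$, and Cramer's rule gives $x_0=(BE-2CD)/(4AC-B^2)$, $y_0=(BD-2AE)/(4AC-B^2)$. Contracting the two center equations with $(x_0,y_0)$ yields the identity $Ax_0^2+Bx_0y_0+Cy_0^2=-(Dx_0+Ey_0)/2$, so the substitution $x=X+x_0$, $y=Y+y_0$ eliminates the linear terms and produces
\begin{equation*}
AX^2+BXY+CY^2=\frac{CD^2+AE^2-BDE-F(4AC-B^2)}{4AC-B^2}.
\end{equation*}
Both numerator and denominator are positive by hypothesis. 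A rotation diagonalizing $Q$ then gives $\lambda_1u^2+\lambda_2v^2=c$ with $\lambda_1,\lambda_2,c>0$ (the eigenvalues are positive because $Q$ is positive definite), which is a nondegenerate ellipse.

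For \emph{necessity}, assume the equation defines an ellipse. Every ellipse has a unique center, so the linear system above must have a unique solution, forcing $4AC-B^2\neq0$. Moreover, the quadratic part of the defining equation of an ellipse must be a definite form, since an indefinite form yields a hyperbola and a singular form yields a parabola or a degenerate locus; with $A>0$, this forces $4AC-B^2>0$. Performing the same translation and diagonalization then yields $\lambda_1u^2+\lambda_2v^2=c$ with $\lambda_1,\lambda_2>0$, and the locus is a nondegenerate real ellipse iff $c>0$. Unwinding the formula for $c$ yields the second inequality.

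The main computational step is the algebraic identity for the constant term after translation; this is a routine expansion once the center coordinates are inserted, and uses the contraction identity above to collapse the quadratic and linear contributions. The one conceptual subtlety lies in the necessity direction: one must fix the convention that "ellipse" means a nondegenerate real ellipse (not a point, empty set, or pair of complex-conjugate lines), which is exactly what turns the second inequality from $\geq0$ into the strict $>0$ stated in the lemma.
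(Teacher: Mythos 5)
Your proof is correct, but note that the paper deliberately states this lemma \emph{without} proof (``We now state a result, without proof, about when a quadratic equation\dots''), so there is no in-paper argument to compare against; what you have supplied is a standard and valid completion. The key computation checks out: with center $(x_0,y_0)=\bigl((BE-2CD)/(4AC-B^2),\,(BD-2AE)/(4AC-B^2)\bigr)$, the contraction identity gives $\Psi(x_0,y_0)=\tfrac{1}{2}(Dx_0+Ey_0)+F=-\bigl(CD^2+AE^2-BDE-F(4AC-B^2)\bigr)/(4AC-B^2)$, so the translated equation is indeed $AX^2+BXY+CY^2=c$ with $c$ equal to the displayed quotient, and positive definiteness of the quadratic part (via $A>0$ and $4AC-B^2>0$) plus $c>0$ yields a nondegenerate real ellipse. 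Your quantity $CD^2+AE^2-BDE-F(4AC-B^2)$ is, up to the positive factor $-4$, the determinant of the full $3\times3$ matrix of the conic, which is the classical nondegeneracy condition. Two small points: in the necessity direction, the inference ``the ellipse has a unique center, hence the linear system has a unique solution, hence $4AC-B^2\neq0$'' is slightly circular as phrased (you need to identify centers of symmetry of the locus with solutions of that system first); your subsequent argument via definiteness of the quadratic part is the cleaner route and already suffices, so I would lead with that. Also, you should state explicitly that circles are admitted as ellipses (the paper relies on this when the inscribed ellipse of minimal eccentricity is a circle), so the case $\lambda_1=\lambda_2$ in your diagonalized form causes no trouble.
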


The following proposition gives the equation of an ellipse inscribed in the
quadrilateral $Q$ with vertices $(0,0),(0,u),(v,w)$, and $(s,t)$. We
obtained (\ref{1}) below using some results from \cite{H1} along with a
method for obtaining the equation of an ellipse given its foci and the
lengths of the axes of the ellipse. We do not provide those somewhat
cumbersome details here. This equation was not given in \cite{H1}. Rather,
we state the equation below and prove that the equation works using
elementary calculus. One could, of course, attempt to derive the equation of
an ellipse inscribed in the quadrilateral $Q$ with vertices $%
(0,0),(0,u),(v,w)$, and $(s,t)$ by solving a system of nonlinear equations
for the unknown coefficients, $A$ thru $F$, of the equation, and for the
unknown points of tangency. We tried this for a simpler quadrilateral and it
is somewhat messy.

\begin{proposition}
\label{P2}Let $Q$ be the quadrilateral with vertices $(0,0),(0,u),(v,w)$,
and $(s,t)$, which satisfy (\ref{R0}), (\ref{R1}), and (\ref{R2}). Let $I$
be given by (\ref{I}) and let $L$ be given by (\ref{9}). Then $E_{0}$ is an
ellipse inscribed in $Q$\ if and only if the general equation of $E_{0}$ is
given by 
\begin{gather}
4\left( s-v\right) {\large (}(s-v)L^{2}(h)+uw(2h-s){\large )}\left(
x-h\right) ^{2}  \notag \\
+4\left( s-v\right) ^{2}h^{2}{\large (}y-L(h){\large )}^{2}+4\left(
s-v\right) \times  \label{1} \\
{\large (}\left( -2t+2u+2w\right) h^{2}+  \notag \\
\left( vt-su-ws-2vu\right) h+\allowbreak svu{\large )}\left( x-h\right) 
{\large (}y-L(h){\large )}  \notag \\
=uR(h)0,h\in I\text{.}  \notag
\end{gather}
\end{proposition}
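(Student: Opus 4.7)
The plan is to treat (\ref{1}) as a quadratic equation centered at the point $(h,L(h))$ and verify three things: that it defines an ellipse for every $h\in I$, that this ellipse is tangent to each of the four lines $L_1,\dots,L_4$ at a point lying on the corresponding side $S_i$, and that conversely every inscribed ellipse is of this form. Notice that (\ref{1}) already has no linear terms in $x-h$ and $y-L(h)$, so the center of the conic is automatically $(h,L(h))$, which is the point of the Newton segment $Z$ lying above $h$ by (\ref{9}).

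First I would apply Lemma \ref{L1} to the expanded form $A(x-h)^2+B(x-h)(y-L(h))+C(y-L(h))^2=uR(h)$. The coefficient of $(y-L(h))^2$ is $4(s-v)^2h^2>0$ on $I$; the coefficient of $(x-h)^2$ is $4(s-v)\bigl((s-v)L^2(h)+uw(2h-s)\bigr)$, and I would check (using (\ref{R0}), (\ref{R1}), and the sign data from Lemma \ref{L3}) that it equals $4$ times a positive multiple of one of the $L_j(h)$, so is positive on $I$. The non-degeneracy condition $CD^2+AE^2-BDE-F(4AC-B^2)>0$ reduces, because the center has been translated to the origin, to $u R(h)(4AC-B^2)>0$, which follows from (\ref{8}) together with the positivity of $4AC-B^2$; the latter is again a rational combination of the $L_j(h)$ whose sign is pinned down by Lemma \ref{L3}.

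Next, for tangency, I would substitute each of the four lines $L_1,L_2,L_3,L_4$ into (\ref{1}) in turn and check that the resulting single-variable quadratic has discriminant zero. For $L_2: x=0$ this is essentially immediate: setting $x=0$ produces a quadratic in $y$ whose discriminant is a polynomial in $h$ that, by Lemma \ref{L3}, factors through $R(h)$ and cancels the right-hand side. For $L_1,L_3,L_4$ the computation is analogous but longer; in each case the discriminant will factor as a product of the $L_j(h)$ appearing in Lemma \ref{L3} times $R(h)$, with the $R(h)$ on the right-hand side cancelling. Having obtained double-root tangency with each boundary line, I would then read off the unique point of tangency and verify, using (\ref{R0})--(\ref{R2}) and the bounds defining $I$, that its $x$-coordinate lies between the $x$-coordinates of the endpoints of the corresponding side $S_i$; this places the point of tangency on $S_i$ rather than on the extension of $L_i$, so the ellipse is genuinely inscribed in $Q$.

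For the converse, let $E_0$ be any ellipse inscribed in $Q$. By Theorem \ref{Newton}, the center of $E_0$ lies on $Z$; since $Z$ is parametrized by $\bigl(h,L(h)\bigr)$ with $h\in I$, the center has the form $(h_0,L(h_0))$ for exactly one $h_0\in I$. The ellipse defined by (\ref{1}) with $h=h_0$ is, by the first part of the argument, inscribed in $Q$ with the same center $(h_0,L(h_0))$. Invoking Theorem \ref{locus of centers} (whose uniqueness is supplied by Proposition \ref{P1}, since $Q$ is assumed not to be a parallelogram in this section), there is only one inscribed ellipse with that center, so $E_0$ coincides with (\ref{1}). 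I expect the main obstacle to be the bookkeeping for the discriminant computations on $L_3$ and $L_4$, whose equations carry all four parameters $s,t,v,w$; the strategy there is to recognise the discriminant as a product of the particular linear factors $L_1,\dots,L_5$ isolated in Lemma \ref{L3}, so that the positivity already established on $I$ automatically forces the correct sign and the correct double root.
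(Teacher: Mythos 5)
Your proposal follows the same architecture as the paper's proof: use Lemma \ref{L1} to show that (\ref{1}) defines an ellipse for each $h\in I$ (the paper computes $4A(h)C(h)-B^{2}(h)=16u(s-v)^{2}R(h)$ and the nondegeneracy quantity $16(s-v)^{2}u^{2}R^{2}(h)$, both positive by (\ref{8})), establish tangency to each side with the contact point interior to that side, and obtain the converse from Theorem \ref{Newton} combined with Proposition \ref{P1}. The one step where you genuinely diverge is the tangency verification. You propose restricting (\ref{1}) to each boundary line and showing the resulting one-variable quadratic has zero discriminant, then checking that the double root lands inside the side; the paper instead exhibits the four contact points in advance as convex combinations $\zeta_j=\lambda_j V+(1-\lambda_j)V'$ of adjacent vertices, with each $\lambda_j$ a ratio of the linear forms from Lemma \ref{L3}, so that $\lambda_j\in(0,1)$ (hence interiority) is immediate from that lemma and (\ref{10})--(\ref{12}); it then verifies $\Psi(\zeta_j)=0$ and matches the implicit slope $-\dfrac{\partial\Psi/\partial x}{\partial\Psi/\partial y}$ at $\zeta_j$ to the slope of the side, treating the vertical side $S_2$ separately via $\partial\Psi/\partial y=0$, $\partial\Psi/\partial x\neq 0$. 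Your route avoids having to guess the contact points but costs four discriminant computations plus a separate interiority check; the paper's route front-loads the guesswork into the $\lambda_j$ and gets interiority for free. Two minor cautions: your claim that the coefficient of $(x-h)^2$ is a positive multiple of one of the $L_j(h)$ cannot be literally correct, since that coefficient is quadratic in $h$ while the $L_j$ are linear --- but its positivity need not be checked separately, as it follows from $C(h)>0$ together with $4A(h)C(h)-B^{2}(h)>0$; and in the converse you should rest the uniqueness on Proposition \ref{P1} alone (as you do parenthetically) rather than on Theorem \ref{locus of centers}, whose proof in the paper depends on Proposition \ref{P2} itself.
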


\begin{proof}
Suppose that the general equation of $E_{0}$ is given by (\ref{1}) for fixed 
$h\in I$; Then we can write the equation of $E_{0}$ as $\Psi (x,y)=0$, where 
$\Psi (x,y)=A(h)x^{2}+B(h)xy+C(h)y^{2}+D(h)x+E(h)y+F(h)$, and 
\begin{eqnarray}
A(h) &=&4\left( s-v\right) ^{2}\left( \left( \dfrac{1}{2}t+\dfrac{w+u-t}{v-s}%
\left( h-\dfrac{1}{2}s\right) \right) ^{2}+\dfrac{wu(2h-s)}{s-v}\right) , 
\notag \\
B(h) &=&4\left( s-v\right) {\large (}2\left( u+w-t\right) h^{2}+{\large (}%
v(t-2u)-s(u+w){\large )}h+uv\allowbreak s{\large ),}  \notag \\
C(h) &=&4\left( s-v\right) ^{2}h^{2},  \label{2} \\
D(h) &=&\allowbreak 2u\left( 2h-s\right) {\large (}2{\large (}v(w+t-u)-2ws%
{\large )}h+v{\large (}s(u+w)-vt{\large )),}  \notag \\
E(h) &=&4uv\left( s-v\right) h\left( 2h-s\right) \text{, and}  \notag \\
F(h) &=&u^{2}\allowbreak v^{2}\left( 2h-s\right) ^{2}\text{.}  \notag
\end{eqnarray}%
First we want to show that $\Psi (x,y)=0$ defines the equation of an
ellipse. Substituting for $A(h)$ thru $F(h)$ using (\ref{2}) and simplifying
gives 
\begin{gather}
4A(h)C(h)-B^{2}(h)=16u\left( s-v\right) ^{2}R(h)\allowbreak {\large ,} 
\notag \\
{\large (}C(h)D^{2}(h)+A(h)E^{2}(h)-B(h)D(h)E(h){\large )}-F(h){\large (}%
4A(h)C(h)-B^{2}(h){\large )}=  \label{29} \\
=\allowbreak 16\left( s-v\right) ^{2}u^{2}R^{2}(h)\text{.}  \notag
\end{gather}%
By (\ref{8}) and Lemma \ref{L1}, (\ref{1}) defines the equation of an
ellipse for any $h\in I$. Now let the $L_{j}$'s be given as in Lemma \ref{L3}%
. Define the following points, which we will show shortly are the points of
tangency of $E_{0}$ with $Q$:

\textbullet\ $\zeta _{1}=\lambda _{1}(v,w)+(1-\lambda _{1})(0,0)$, where $%
\lambda _{1}=\dfrac{\allowbreak \left( s-2h\right) uv}{L_{1}(h)}$, which
implies that $1-\lambda _{1}=\dfrac{\left( vt-ws\right) \allowbreak \left(
2h-v\right) }{L_{1}(h)}$.

\textbullet\ $\zeta _{2}=\lambda _{2}(0,u)+(1-\lambda _{2})(0,0)$, where $%
\lambda _{2}=\dfrac{\left( s-2h\right) v}{2h\left( s-v\right) }$, which
implies that $1-\lambda _{2}=\allowbreak \dfrac{s(2h-v)}{2h\left( s-v\right) 
}$.

\textbullet\ $\zeta _{3}=\lambda _{3}(s,t)+(1-\lambda _{3})(0,u)$, where $%
\lambda _{3}=\dfrac{\left( 2h-v\right) su}{L_{2}(h)}$, which implies that $%
1-\lambda _{3}=\dfrac{{\large (}v(t-u)+(u-w)s{\large )}\left( s-2h\right) }{%
L_{2}(h)}$.

\textbullet\ $\zeta _{4}=\lambda _{4}(v,w)+(1-\lambda _{4})(s,t)$, where $%
\lambda _{4}=\dfrac{L_{3}(h)}{\left( s-v\right) L_{4}(h)}$, which implies
that $1-\lambda _{4}=\dfrac{(vt-ws)\allowbreak \left( 2h-v\right) }{\left(
s-v\right) L_{4}(h)}$.

Then $0<\zeta _{j}<1,j=1,2,3,4$ by Lemma \ref{L3} and (\ref{10})--(\ref{12}%
); Hence $\zeta _{j}\in S_{j},j=1,2,3,4$, where $S_{j}$ denote the sides of $%
Q$ going clockwise with $S_{1}=\overline{(0,0)\ (v,w)}$; It follows easily
that $\Psi (\zeta _{j})=0,j=1,2,3,4$, which implies that $\zeta _{j}\in
E_{0},j=1,2,3,4$; For fixed $h\in I$, let $Z(x,y)=-\dfrac{\partial \Psi
/\partial x}{\partial \Psi /\partial y}$, which represents the slope, $%
\dfrac{dy}{dx}$, of the ellipse. Then $Z(\zeta _{1})=\allowbreak \dfrac{w}{v}%
=$ slope of $L_{1},Z(\zeta _{3})=\allowbreak \dfrac{t-u}{s}=$ slope of $%
L_{3} $, and $Z(\zeta _{4})=\dfrac{t-w}{s-v}=$ slope of $L_{4}$; Hence $%
E_{0} $ is tangent to $S_{1},S_{3}$, and $S_{4}$ at the points $\zeta
_{1},\zeta _{3}$, and $\zeta _{4}$ respectively. Also, $\dfrac{\partial \Psi 
}{\partial y}(\zeta _{2})=0$ and $\dfrac{\partial \Psi }{\partial x}(\zeta
_{2})=\allowbreak -\dfrac{2u}{h}\left( 2h-v\right) \left( s-2h\right)
L_{5}(h)\allowbreak \neq 0$ by Lemma \ref{L3} and (\ref{10}), which implies
that $E_{0}$ is tangent to the vertical line segment $S_{2}$ at $\zeta _{2}$%
. For any simple closed convex curve, such as an ellipse, tangent to each
side of $Q$ then implies that that curve lies in $Q$. That proves that $%
E_{0} $ is inscribed in $Q$. Second, suppose that $E_{0}$ is an ellipse
inscribed in $Q$. By Theorem \ref{Newton},\textbf{\ }$E_{0}$ has center $%
{\large (}h_{1},L(h_{1}){\large )}$ for some $h_{1}\in I$. We have just
shown that (\ref{1}) represents a family of ellipses inscribed in $Q$\ as $h$
varies over $I$, and it is not hard to show that each ellipse given by (\ref%
{1}) has center ${\large (}h,L(h){\large )}$ for some $h\in I$. Let $\tilde{E%
}_{0} $ be the ellipse given by (\ref{1}) with $h=h_{1}$. Hence $\tilde{E}%
_{0}$ also has center ${\large (}h_{1},L(h_{1}){\large )}$ and is inscribed
in $Q$. Since $(0,0),(0,u),(v,w)$, and $(s,t)$ satisfy (\ref{R2}), $Q$ is
not a parallelogram. Then by Proposition \ref{P1}, $\tilde{E}_{0}=E_{0}$ and
the general equation of $E_{0}$ must be given by (\ref{1}).
\end{proof}

\begin{proof}
(proof of Theorem \ref{locus of centers}): \textit{Existence} follows
immediately from Proposition \ref{P2}. We already proved \textit{uniqueness }%
above in the proof of Proposition \ref{P2}.
\end{proof}

\section{Preliminary Lemmas}

\begin{lemma}
\label{L5}Let $G(h)=\dfrac{J(h)-\sqrt{M(h)}}{J(h)+\sqrt{M(h)}}$, where $J$
and $M$ are differentiable functions on some interval, $I$, with $J(h)+\sqrt{%
M(h)}\neq 0$ on $I$ and $M(h)>0$ on $I$. Then $G$ is differentiable on $I$
and $G^{\prime }(h)=\dfrac{2J^{\prime }(h)M(h)-J(h)M^{\prime }(h)}{\sqrt{M(h)%
}{\large (}J(h)+\sqrt{M(h)}{\large )}^{2}}$ for $h\in I$.
\end{lemma}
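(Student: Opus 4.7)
The plan is to apply the quotient rule directly to $G(h) = N(h)/D(h)$ where I set $N(h) = J(h) - \sqrt{M(h)}$ and $D(h) = J(h) + \sqrt{M(h)}$. Since $M(h) > 0$ on $I$ the function $\sqrt{M(h)}$ is differentiable on $I$ with derivative $M'(h)/\bigl(2\sqrt{M(h)}\bigr)$, so $N$ and $D$ are both differentiable. The hypothesis $J(h)+\sqrt{M(h)} \neq 0$ means $D(h) \neq 0$ on $I$, so the quotient rule is applicable and $G$ is differentiable there.

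Next I would compute
\[
N'(h) = J'(h) - \frac{M'(h)}{2\sqrt{M(h)}}, \qquad D'(h) = J'(h) + \frac{M'(h)}{2\sqrt{M(h)}},
\]
and expand the numerator $N'(h)D(h) - N(h)D'(h)$ of $G'$. The $J(h)J'(h)$ terms cancel by symmetry, the $\pm J'(h)\sqrt{M(h)}$ terms combine to $2J'(h)\sqrt{M(h)}$, the $\pm M'(h)/2$ (pure) terms cancel, and the two $J(h)M'(h)/\bigl(2\sqrt{M(h)}\bigr)$ terms add to $J(h)M'(h)/\sqrt{M(h)}$ (with a minus sign). Thus
\[
N'(h)D(h) - N(h)D'(h) = 2J'(h)\sqrt{M(h)} - \frac{J(h)M'(h)}{\sqrt{M(h)}} = \frac{2J'(h)M(h) - J(h)M'(h)}{\sqrt{M(h)}}.
\]

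Dividing by $D(h)^2 = \bigl(J(h)+\sqrt{M(h)}\bigr)^2$ then yields the stated formula for $G'(h)$. There is no genuine obstacle here; this is a routine calculus computation, and the only point requiring care is the cancellation bookkeeping in the expansion of $N'D - ND'$, which is straightforward once one notices the symmetry between $N$ and $D$.
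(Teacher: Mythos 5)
Your proposal is correct and follows the same route as the paper, which simply states that the result ``follows from the quotient rule after some simplification''; you have carried out that simplification explicitly and the cancellations check out, yielding $N'D - ND' = \bigl(2J'(h)M(h) - J(h)M'(h)\bigr)/\sqrt{M(h)}$ as claimed. Nothing further is needed.
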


\begin{proof}
It is clear that $G$ is differentiable on $I$, and the rest of the lemma
follows from the quotient rule after some simplification.
\end{proof}

Before proving Theorem \ref{T1}, we state and prove the following series of
lemmas. The purpose of these lemmas will be to show that the eccentricity of
an inscribed ellipose, as a function of $h$, has a unique root in $I$, where 
$I$ is given by (\ref{I}) throughout this section. We also want to find a
formula for that root as well. Assume throughout that (\ref{R0}), (\ref{R1}%
), and (\ref{R2}) hold.

First we define the following quadratic polynomial in $h$: 
\begin{eqnarray}
o(h) &=&-2\left( s^{2}+t^{2}\right) \left( s-v\right) h^{2}-2Kh+sK,
\label{oK} \\
K &=&(s^{2}+t^{2})v^{2}-2ws(vt-ws)\text{.}  \notag
\end{eqnarray}

\begin{lemma}
\label{L6}: $K>0$.
\end{lemma}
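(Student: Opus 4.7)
The plan is to show that $K$ is expressible as a sum of (non-negative) squares, at least one of which is strictly positive under the hypothesis (\ref{R0}). Expanding directly,
\[
K = s^{2}v^{2} + t^{2}v^{2} - 2wsvt + 2w^{2}s^{2}.
\]
I would isolate the three middle terms $t^{2}v^{2} - 2wsvt + w^{2}s^{2}$, which form a perfect square $(tv - ws)^{2}$, leaving $s^{2}v^{2} + w^{2}s^{2} = s^{2}(v^{2}+w^{2})$. Thus
\[
K = s^{2}(v^{2} + w^{2}) + (tv - ws)^{2}.
\]
Since $s > 0$ and $v > 0$ by (\ref{R0}), the first summand is strictly positive, while the second is non-negative, so $K > 0$.

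The main obstacle is essentially nonexistent here: it is simply a matter of spotting the right regrouping. I would not need to invoke (\ref{R1}) (which already tells us $vt - ws > 0$, making $(tv-ws)^{2} > 0$ as well, though this is not required) or (\ref{R2}). The only hypotheses actually used are $s > 0$ and $v > 0$ from (\ref{R0}).
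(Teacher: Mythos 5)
Your proof is correct, but it takes a different route from the paper's. You expand $K$ and regroup it as the sum of squares $K = s^{2}(v^{2}+w^{2}) + (tv-ws)^{2}$, which is immediately positive since $s>0$ and $v>0$; the identity checks out. The paper instead views $K$ as a quadratic in $v$, namely $K(v)=(s^{2}+t^{2})v^{2}-2wstv+2s^{2}w^{2}$, computes its discriminant $-4s^{2}w^{2}(t^{2}+2s^{2})$, concludes that $K(v)$ has no real roots, and then deduces $K(v)>0$ from $K(0)=2s^{2}w^{2}>0$. Your decomposition is the more elementary of the two and is in fact the same trick the paper itself uses one lemma later to show $p_{1}=(2ws-vt)^{2}+v^{2}s^{2}>0$ in Lemma \ref{L7}. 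It also has a small robustness advantage: the hypotheses (\ref{R0}) do not exclude $w=0$, and in that degenerate case the paper's discriminant is $0$ rather than strictly negative and $K(0)=0$ rather than positive, so the paper's argument as literally written needs a separate word there, whereas your identity gives $K=(s^{2}+t^{2})v^{2}>0$ with no extra case analysis. You are also right that (\ref{R1}) and (\ref{R2}) are not needed.
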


\begin{proof}
Writing $K$ from (\ref{oK}) as a quadratic in $v$, $K(v)=\allowbreak \left(
s^{2}+t^{2}\right) v^{2}-2wstv+2s^{2}w^{2}$, the discriminant of $K(v)$ is $%
(-2wst)^{2}-4\left( s^{2}+t^{2}\right) (2s^{2}w^{2})=\allowbreak
-4s^{2}w^{2}\left( t^{2}+2s^{2}\right) <0$, which implies that $K(v)$ has no
real roots. Since $K(0)>0$, it follows that $K(v)>0$, for all real $v$.
\end{proof}

\begin{lemma}
\label{L7}Let $p_{1}=v^{2}(s^{2}+t^{2})-4ws\left( vt-ws\right) $; Then $%
p_{1}>0$.
\end{lemma}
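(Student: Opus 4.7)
The plan is to mimic the strategy used for Lemma~\ref{L6}: treat $p_1$ as a quadratic in a well-chosen variable and show it has no real roots plus a positive value at a convenient point. Expanding, I would write
\begin{equation*}
p_1 = v^2(s^2+t^2) - 4wsvt + 4w^2s^2,
\end{equation*}
and view this as a quadratic $p_1(v) = (s^2+t^2)v^2 - 4stw\cdot v + 4s^2w^2$ in the variable $v$ (with $s,t,w$ fixed).

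The leading coefficient $s^2+t^2$ is strictly positive since $s>0$ by (\ref{R0}). Next I would compute the discriminant:
\begin{equation*}
\Delta = (4stw)^2 - 4(s^2+t^2)(4s^2w^2) = 16s^2w^2\bigl(t^2 - (s^2+t^2)\bigr) = -16s^4w^2.
\end{equation*}
Since $s>0$, we have $\Delta \le 0$, with equality only when $w=0$. If $w\ne 0$, then $\Delta<0$ forces $p_1(v)>0$ for every real $v$, hence in particular for our $v$. If $w=0$, then $p_1$ collapses to $v^2(s^2+t^2)$, which is strictly positive because $v>0$ and $s>0$ by (\ref{R0}). Either way, $p_1>0$.

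I do not expect any real obstacle here; the only subtle point is noticing that the $w=0$ case must be handled separately (otherwise one only gets $p_1 \ge 0$ from the discriminant argument, since the constant term $4s^2w^2$ vanishes and $v=0$ would be a root — but $v>0$ is assumed, so this is harmless). No use of (\ref{R1}) or (\ref{R2}) beyond $s>0$, $v>0$ is needed.
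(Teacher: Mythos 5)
Your proof is correct, but it follows a genuinely different route from the paper's. The paper disposes of the lemma in one line by exhibiting $p_{1}$ as a sum of squares: $p_{1}=(2ws-vt)^{2}+v^{2}s^{2}>0$ since $s,v\neq 0$ (expanding gives $4w^{2}s^{2}-4wsvt+v^{2}t^{2}+v^{2}s^{2}=v^{2}(s^{2}+t^{2})-4ws(vt-ws)$, as required). Your discriminant argument, modeled on the proof of Lemma \ref{L6}, is correctly carried out: $\Delta=16s^{2}t^{2}w^{2}-16s^{2}w^{2}(s^{2}+t^{2})=-16s^{4}w^{2}\leq 0$, and you are right that the case $w=0$ must be split off, since there the discriminant only vanishes and positivity must instead come from $v>0$. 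In effect your approach is the sum-of-squares identity obtained by completing the square in $v$, namely $p_{1}=(s^{2}+t^{2})\bigl(v-\tfrac{2stw}{s^{2}+t^{2}}\bigr)^{2}+\tfrac{4s^{4}w^{2}}{s^{2}+t^{2}}$, whereas the paper groups the squares differently so that one of them, $v^{2}s^{2}$, is positive outright and no case analysis is needed. What the paper's decomposition buys is brevity and the absence of any degenerate case; what yours buys is uniformity --- it is the same mechanical recipe that already handled Lemma \ref{L6}, requiring no ad hoc algebraic insight.
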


\begin{proof}
Rewrite $p_{1}=(2ws-vt)^{2}+v^{2}s^{2}>0$ since $s,v\neq 0$.
\end{proof}

\begin{lemma}
\label{L8}$o$ has exactly one root in $I$.
\end{lemma}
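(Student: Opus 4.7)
The plan is to show that $o$ changes sign across the endpoints of $I$ and then use the fact that $o$ is a quadratic to upgrade existence to uniqueness. Since $o$ is a polynomial of degree at most two in $h$, any interval on which $o$ takes opposite signs at its endpoints can contain at most one root (if two real roots of a quadratic both lay in $(v/2, s/2)$, then $o$ would have the same sign at the two endpoints), while the intermediate value theorem would supply at least one.

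First I would evaluate $o$ at $h = v/2$:
\begin{equation*}
o(v/2) = -\tfrac{1}{2}(s^2+t^2)(s-v)v^2 - Kv + sK = \tfrac{1}{2}(s-v)\bigl[2K - (s^2+t^2)v^2\bigr].
\end{equation*}
Using the definition $K = (s^2+t^2)v^2 - 2ws(vt-ws)$, the bracket simplifies to $(s^2+t^2)v^2 - 4ws(vt-ws) = p_1$, so
\begin{equation*}
o(v/2) = \tfrac{1}{2}(s-v)\, p_1.
\end{equation*}
Next I would evaluate $o$ at $h = s/2$, where the linear-in-$h$ terms cancel against $sK$:
\begin{equation*}
o(s/2) = -\tfrac{1}{2}(s^2+t^2)(s-v)s^2 - sK + sK = -\tfrac{1}{2}(s^2+t^2)(s-v)s^2.
\end{equation*}

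The product is then
\begin{equation*}
o(v/2)\, o(s/2) = -\tfrac{1}{4}(s-v)^2 (s^2+t^2) s^2 p_1,
\end{equation*}
which is strictly negative: $(s-v)^2 > 0$ by (\ref{R2}), $s > 0$ and $s^2+t^2 > 0$ by (\ref{R0}), and $p_1 > 0$ by Lemma \ref{L7}. Hence $o$ takes opposite signs at the two endpoints of $I$ regardless of whether $v < s$ or $s < v$. By the intermediate value theorem, $o$ has at least one root in $I$. Since $s \neq v$ the leading coefficient $-2(s^2+t^2)(s-v)$ is nonzero, so $o$ is a genuine quadratic; as argued at the start, the opposite-sign condition at the endpoints rules out two roots in $I$, giving exactly one.

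There is no real obstacle here beyond the two endpoint computations; the only step requiring care is recognizing that the bracket appearing in $o(v/2)$ is precisely the expression $p_1$ of Lemma \ref{L7}, which is exactly why that lemma was proved in advance. Lemma \ref{L6} is not needed for this particular lemma but presumably for later analysis of $o$.
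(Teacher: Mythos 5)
Your proof is correct and follows essentially the same route as the paper: both evaluate $o$ at the endpoints $h=v/2$ and $h=s/2$, obtain $o(v/2)=\tfrac{1}{2}(s-v)p_1$ and $o(s/2)=-\tfrac{1}{2}(s^2+t^2)(s-v)s^2$, invoke Lemma \ref{L7} to conclude the product is negative, and use the quadratic nature of $o$ to pass from ``at least one root'' to ``exactly one.'' (The paper's proof additionally records the positivity of the discriminant and the explicit roots $h_\pm$ for later use, which is where Lemma \ref{L6} enters, exactly as you surmised.)
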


\begin{proof}
\begin{equation}
o\left( \dfrac{v}{2}\right) =\allowbreak \dfrac{1}{2}\left( s-v\right)
p_{1}\allowbreak ,o\left( \dfrac{s}{2}\right) =-\allowbreak \dfrac{1}{2}%
s^{2}\left( s-v\right) \left( s^{2}+t^{2}\right) \text{.}  \label{oendpts}
\end{equation}%
By Lemma \ref{L7}, $o\left( \dfrac{v}{2}\right) o\left( \dfrac{s}{2}\right)
<0$, which implies that $o$ has an odd number of roots in $I$; Since $o$ is
a quadratic, $o$ must have one root in $I$. Note that since $o$ has two
distinct real roots, the discriminant of $o,4K^{2}+8\left(
s^{2}+t^{2}\right) \left( s-v\right) sK=4K{\large (}2\left(
s^{2}+t^{2}\right) s\left( s-v\right) +K{\large )}$, is positive. By Lemma %
\ref{L6}, $2\left( s^{2}+t^{2}\right) s\left( s-v\right) +K>0$\ and by the
quadratic formula, the roots of $o$ are
\end{proof}

\begin{eqnarray}
h_{\_} &=&\sqrt{K}\dfrac{-\sqrt{K}-\sqrt{2\left( s^{2}+t^{2}\right) s\left(
s-v\right) +K}}{2\left( s^{2}+t^{2}\right) \left( s-v\right) },
\label{oroots} \\
h_{+} &=&\sqrt{K}\dfrac{-\sqrt{K}+\sqrt{2\left( s^{2}+t^{2}\right) s\left(
s-v\right) +K}}{2\left( s^{2}+t^{2}\right) \left( s-v\right) }\text{.} 
\notag
\end{eqnarray}

Note that $h_{+}-h_{\_}=\dfrac{\sqrt{K}\sqrt{2\left( s^{2}+t^{2}\right)
s\left( s-v\right) +K}}{\left( s^{2}+t^{2}\right) \left( s-v\right) }$,
which implies that $\left\{ 
\begin{array}{ll}
h_{\_}>h_{+} & \text{if }s<v \\ 
h_{+}>h_{\_} & \text{if }v<s%
\end{array}%
\right. $.

The following lemma allows us to find the unique root of $o$ in $I$.

\begin{lemma}
\label{L10}$h_{+}$ is the unique root of $o$ in $I$.
\end{lemma}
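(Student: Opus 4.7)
My plan is to leverage Lemma \ref{L8}, which already guarantees that $o$ has exactly one root in $I$, and then identify which of $h_+, h_-$ is that root by a case analysis on the sign of $s-v$. This sign simultaneously controls the interval $I$, the leading-coefficient sign of $o$, and (by the remark following (\ref{oroots})) the ordering of $h_+$ versus $h_-$, so the two cases require different arguments.

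First I would handle the case $v<s$, where $I=(v/2,s/2)$. Vieta applied to $o(h)=-2(s^{2}+t^{2})(s-v)h^{2}-2Kh+sK$ gives a product of roots equal to $-\tfrac{sK}{2(s^{2}+t^{2})(s-v)}$. Since $s>0$ by (\ref{R0}), $K>0$ by Lemma \ref{L6}, and $s-v>0$, this product is negative, so exactly one of $h_\pm$ is negative. The observation following (\ref{oroots}) says $h_+>h_-$ in this case, forcing $h_-<0<h_+$. Because $I\subset(0,\infty)$, $h_-$ cannot be the root in $I$, so by Lemma \ref{L8} the root in $I$ must be $h_+$.

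The case $s<v$ is where I expect the main obstacle, because now the product and sum of the roots are both positive, so both $h_\pm$ are positive and the sign-of-$h$ argument no longer suffices. Here I would instead exploit the shape of the parabola: the leading coefficient $-2(s^{2}+t^{2})(s-v)$ is positive, so $o$ opens upward, and $o$ is strictly negative between its two real roots. From (\ref{oendpts}), $o(v/2)=\tfrac{1}{2}(s-v)p_{1}<0$ (using Lemma \ref{L7} and $s<v$), which forces $v/2$ to lie strictly between $h_+$ and $h_-$. In this case $h_->h_+$, so in particular $h_->v/2$, placing $h_-$ outside $I=(s/2,v/2)$. Once again Lemma \ref{L8} pins the unique root of $o$ in $I$ down to $h_+$.

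The calculations are all elementary; the only care required is bookkeeping the way $\operatorname{sgn}(s-v)$ simultaneously flips the parabola's orientation, the ordering of $h_\pm$, and which of $s/2,v/2$ is the left endpoint of $I$. No new estimates beyond Lemmas \ref{L6}--\ref{L8} and the formulas (\ref{oendpts})--(\ref{oroots}) should be needed.
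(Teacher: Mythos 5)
Your proof is correct and follows essentially the same route as the paper: a case split on the sign of $s-v$, ruling out $h_-$ in the first case because it is negative (you via Vieta, the paper by inspecting (\ref{oroots}) directly) and in the second case because it exceeds $v/2$ (you via the upward-opening parabola being negative at $v/2$, the paper via $o(v/2)<0$ together with $o(h)\to\infty$), then invoking Lemma \ref{L8}. No substantive difference.
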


\begin{proof}
\textbf{Case 1:} $s>v$; Then $I=\left( \dfrac{v}{2},\dfrac{s}{2}\right) $
and $h_{\_}<0$, which implies that $h_{\_}\notin I$; By Lemma \ref{L8}, $%
h_{+}\in I$.

\textbf{Case 2:} $s<v$; Then $I=\left( \dfrac{s}{2},\dfrac{v}{2}\right) $;
Since $\lim\limits_{h\rightarrow \infty }o(h)=\infty $ and $o\left( \dfrac{v%
}{2}\right) <0$ by (\ref{oroots}) and Lemma \ref{L7}, $o$ has a root in the
interval $\left( \dfrac{v}{2},\infty \right) $; Hence the smaller of the two
roots of $o$ must lie in $I$; Since $h_{+}<h_{\_}$, one must have $h_{+}\in
I $.
\end{proof}

Now define the polynomials 
\begin{eqnarray}
J(h) &=&A(h)+C(h),  \label{jm} \\
M(h) &=&{\large (}A(h)-C(h){\large )}^{2}+{\large (}B(h){\large )}^{2}\text{,%
}  \notag
\end{eqnarray}

where $A(h),B(h)$, and $C(h)$ are given by (\ref{2}). Note that $M(h)\geq 0$%
; Some simplification yields%
\begin{equation}
J^{2}(h)-M(h)=16u\left( s-v\right) ^{2}R(h)\text{,}  \label{jmr}
\end{equation}

where $R$ is given by (\ref{R3}).

\begin{lemma}
\label{L11}$J(h)>0,h\in I$.
\end{lemma}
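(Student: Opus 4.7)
The plan is to leverage the already-established identity \eqref{jmr}, namely $J^{2}(h)-M(h)=16u(s-v)^{2}R(h)$, together with the positivity of $R$ on $I$ from \eqref{8}. Since $u>0$ and $(s-v)^{2}>0$ by \eqref{R0} and \eqref{R2}, this identity immediately gives $J^{2}(h)>M(h)\geq 0$ on $I$. In particular $J(h)\neq 0$ for every $h\in I$. Because $J$ is a polynomial (hence continuous), this non-vanishing means that $J$ has constant sign on the connected interval $I$, so it suffices to determine the sign of $J$ at one convenient point.

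The plan is to evaluate $J$ at the right-hand endpoint $h=s/2$ (or pass to a limit from inside $I$ toward it). Using the formulas for $A(h)$ and $C(h)$ in \eqref{2}, the coefficient $\frac{w+u-t}{v-s}(h-\tfrac{s}{2})$ inside $A$ vanishes when $h=s/2$, and so does the $\frac{wu(2h-s)}{s-v}$ term; what remains is $A(s/2)=4(s-v)^{2}(t/2)^{2}=(s-v)^{2}t^{2}$, and $C(s/2)=4(s-v)^{2}(s/2)^{2}=(s-v)^{2}s^{2}$. Hence
\begin{equation*}
J(s/2)=(s-v)^{2}(s^{2}+t^{2})>0,
\end{equation*}
since $s>0$ by \eqref{R0} and $s\neq v$ by \eqref{R2}. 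By continuity, $J(h)>0$ for all $h$ sufficiently close to $s/2$, and in particular $J$ is positive at some point of the open interval $I$.

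Finally, combining $J(h)\neq 0$ on $I$ with the existence of a point in $I$ at which $J$ is positive, the intermediate value theorem rules out any sign change, so $J(h)>0$ for every $h\in I$. The only thing that requires care is making sure the endpoint evaluation actually gives a strictly positive value (which uses both conditions in \eqref{R0} and \eqref{R2}); there is no serious obstacle, as the cubic identity \eqref{jmr} already does the heavy lifting by forcing $J$ to stay away from zero.
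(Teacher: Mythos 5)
Your proposal is correct and follows essentially the same route as the paper: use the identity \eqref{jmr} together with \eqref{8} and $M\geq 0$ to conclude $J$ cannot vanish on $I$, then fix the sign by an endpoint evaluation. The only cosmetic difference is that you evaluate at $h=s/2$, getting $J(s/2)=(s-v)^{2}(s^{2}+t^{2})$, while the paper evaluates at $h=v/2$, getting $J(v/2)=(s-v)^{2}\bigl((w-u)^{2}+v^{2}\bigr)$; both are strictly positive and serve the same purpose.
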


\begin{proof}
If $J(h_{0})=0$ for some $h_{0}\in I$, then $R(h_{0})\leq 0$ by (\ref{jmr}),
which contradicts (\ref{8}); Since $J\left( \dfrac{v}{2}\right) =\allowbreak
\left( s-v\right) ^{2}{\large (}(w-u)^{2}+v^{2}{\large )}>0$, it follows
that $J(h)>0,h\in I$.
\end{proof}

\begin{lemma}
\label{L14}Suppose that $Q$\ has vertices $(0,0),(0,u),(s,t)$, and $(v,w)$,
where $s,t,u,v,w$ satisfy (\ref{R0}), (\ref{R1}), and (\ref{R2}).

(i) $Q$ is a type 1 midpoint diagonal quadrilateral if and only if 
\begin{equation}
u=\allowbreak \dfrac{vt-ws}{s}\text{.}  \label{u1}
\end{equation}

(ii) $Q$ is a type 2 midpoint diagonal quadrilateral if and only if 
\begin{equation}
u=\dfrac{vt-ws}{2v-s}\text{.}  \label{u2}
\end{equation}
\end{lemma}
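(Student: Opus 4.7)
The plan is to convert the definition of a midpoint diagonal quadrilateral directly into an algebraic condition on $s,t,u,v,w$ by computing midpoints and checking incidence. With the vertices in the given position, the diagonals are $D_1=\overline{(0,0)\,(s,t)}$ and $D_2=\overline{(0,u)\,(v,w)}$, with midpoints $M_2=(s/2,t/2)$ and $M_1=(v/2,(u+w)/2)$ respectively (matching the notation from Section 2). Since $Q$ is convex with no two sides parallel (by (\ref{R2})), the diagonals cross at a unique interior point $P$. The point $P$ coincides with the midpoint of $D_2$ exactly when $M_1$ lies on the line $\overleftrightarrow{(0,0)\,(s,t)}$, and $P$ coincides with the midpoint of $D_1$ exactly when $M_2$ lies on the line $\overleftrightarrow{(0,u)\,(v,w)}$. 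These two incidences will be taken as the definitions of type~1 and type~2 in the statement (so that they match (\ref{u1}) and (\ref{u2}) respectively).

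For part (i), I would substitute $M_1=(v/2,(u+w)/2)$ into the equation $y=(t/s)x$ of the line through $D_1$. Clearing denominators gives $s(u+w)=vt$, which rearranges to $u=(vt-ws)/s$, i.e.\ (\ref{u1}). For part (ii), I would substitute $M_2=(s/2,t/2)$ into the equation $y=u+\tfrac{w-u}{v}\,x$ of the line through $D_2$; multiplying by $2v$ and isolating the $u$-terms yields $u(2v-s)=vt-ws$, which gives (\ref{u2}) after dividing by $2v-s$. The case $2v=s$ must be excluded before dividing, but it is ruled out automatically: if $2v=s$, the relation $u(2v-s)=vt-ws$ forces $vt=ws$, contradicting $vt-ws>0$ in (\ref{R1}); so whenever $Q$ is of type~2, we indeed have $2v\neq s$ and (\ref{u2}) is meaningful.

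For the converse direction in each case, assume the corresponding relation on $u$ holds. Reversing the computation above shows that the midpoint of the relevant diagonal lies on the line through the other diagonal. Because that midpoint is in the interior of its own diagonal, and therefore in the interior of $Q$, it must in fact lie on the other open diagonal segment as well (the two lines meet at the unique interior crossing point guaranteed by convexity). Hence this midpoint equals $P$, so $Q$ is a midpoint diagonal quadrilateral of the required type. I do not foresee any genuine obstacle here: the argument is pure coordinate bookkeeping, and the only subtlety worth mentioning is the verification that $2v-s\neq 0$ in case (ii), which is handled by (\ref{R1}) as above.
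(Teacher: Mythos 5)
Your proof is correct and follows essentially the same route as the paper: a direct coordinate computation reducing each type to a single linear condition on $u$, with the same treatment of the degenerate case $2v-s=0$ via (\ref{R1}). The only cosmetic difference is that the paper equates the line $L$ through both midpoints with the line containing the relevant diagonal (matching slope and intercept), whereas you test whether the midpoint of one diagonal lies on the line containing the other; these are equivalent formulations, and your identification of which incidence is type 1 (namely $D_{1}=L$, i.e.\ the diagonals meet at the midpoint of $D_{2}$) and which is type 2 agrees with the paper's convention.
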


\begin{proof}
Recall that $L(x)=\dfrac{t}{2}+\dfrac{w+u-t}{v-s}\left( x-\dfrac{s}{2}%
\right) $. The line containing $D_{1}$ is $y=\dfrac{t}{s}x$; Then $%
D_{1}=L\iff \dfrac{w+u-t}{v-s}=\dfrac{t}{s}$ and $\dfrac{t}{2}-\dfrac{s}{2}%
\dfrac{w+u-t}{v-s}=0\iff $ (\ref{u1}) holds. That proves (i). The line
containing $D_{2}$ is $y=u+\dfrac{w-u}{v}x$; Then $D_{2}=L\iff \dfrac{w+u-t}{%
v-s}=\dfrac{w-u}{v}$ and $\dfrac{t}{2}-\dfrac{s}{2}\dfrac{w+u-t}{v-s}=u\iff
(\allowbreak 2v-s)u+ws-vt=0$; Note that if $2v-s=0$, then $(\allowbreak
2v-s)u+ws-vt\neq 0$ since $ws-vt\neq 0$ by (\ref{R1}); Thus $Q$ is a type 2
midpoint diagonal quadrilateral if and only if (\ref{u2}) holds. That proves
(ii).
\end{proof}

We now prove two results about midpoint diagonal quadrilaterals. We define a
trapezoid to be a quadrilateral with at least one pair of parallel sides.

\begin{lemma}
\label{L2}Suppose that $Q$\ is a midpoint diagonal quadrilateral which is
also a trapezoid. Then $Q$ is a parallelogram.
\end{lemma}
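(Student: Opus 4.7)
The plan is to place $Q$ in coordinates $(0,0),(0,u),(s,t),(v,w)$ by an isometry so that (\ref{R0}) and (\ref{R1}) still hold; note that (\ref{R2}) is \emph{not} assumed here, since $Q$ being a trapezoid is precisely a failure of (\ref{R2}). Adjacent sides of a quadrilateral share a vertex and so cannot be parallel, which means the only pairs that can be parallel are the opposite pairs $(S_1,S_3)$ and $(S_2,S_4)$. Reading off the slopes listed after (\ref{R0}), these two possibilities translate into the algebraic conditions
\[
S_1 \parallel S_3 \iff vu = vt-ws, \qquad S_2 \parallel S_4 \iff s=v.
\]

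Next I would invoke Lemma \ref{L14}: the midpoint diagonal hypothesis gives either (type 1) $us = vt-ws$ or (type 2) $u(2v-s) = vt-ws$. Combining each type with each of the two trapezoid conditions produces four cases, and the goal in each is to force \emph{both} opposite pairs to be parallel, yielding a parallelogram.

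In the two cases where $S_1 \parallel S_3$, subtracting $vu = vt-ws$ from the type 1 or type 2 equation gives $u(s-v)=0$ or $u(v-s)=0$; since $u>0$ by (\ref{R0}), we obtain $s=v$, so $S_2 \parallel S_4$ also. In the two cases where $S_2 \parallel S_4$, we have $s=v$, and substituting this into either the type 1 or type 2 equation immediately yields $u=t-w$; then $v(t-u)=vw=ws$, which is exactly $S_1 \parallel S_3$. In every case both pairs of opposite sides are parallel and $Q$ is a parallelogram.

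There is really no serious obstacle here: the argument is pure algebraic bookkeeping, and the only thing to take some care with is checking that $vt-ws \neq 0$ (guaranteed by (\ref{R1})) so that the type 2 characterization from Lemma \ref{L14} is applicable in the form $u(2v-s)=vt-ws$ regardless of whether $2v=s$.
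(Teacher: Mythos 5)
Your route is genuinely different from the paper's. The paper disposes of this lemma in three lines: by an affine map it normalizes the non--parallelogram trapezoid to have vertices $(0,0),(1,0),(0,1),(1,t)$, notes that the two midpoints of the diagonals then both have abscissa $\tfrac{1}{2}$ so the Newton line is vertical, and observes that neither diagonal is vertical --- hence such a $Q$ is never a midpoint diagonal quadrilateral. Your algebraic case analysis is a legitimate alternative, and the computations in all four cases do check out. However, there is a genuine gap in how you invoke Lemma \ref{L14}. That lemma is stated and proved under hypothesis (\ref{R2}), and its proof rests entirely on the explicit form (\ref{9}) of the Newton line $L$, whose slope is $\frac{w+u-t}{v-s}$. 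In your two cases with $S_{2}\parallel S_{4}$ you have $s=v$, so (\ref{9}) is undefined and the characterizations (\ref{u1}) and (\ref{u2}) have simply not been established there; you are citing the lemma exactly where its hypotheses fail. (You flagged the harmless degeneracy $2v=s$, but missed the one that actually matters, $s=v$.)

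The gap is fixable in two ways. One is to re-derive the type $1$/type $2$ conditions from the original definition (intersection point of the diagonals equals a midpoint of a diagonal): intersecting $y=\frac{t}{s}x$ with $y=u+\frac{w-u}{v}x$ gives abscissa $\frac{usv}{vt-ws+su}$, and equating this to $\frac{s}{2}$ or $\frac{v}{2}$ reproduces $us=vt-ws$ and $u(2v-s)=vt-ws$ with no appeal to $s\neq v$. The other, simpler, fix is to handle $s=v$ directly: then $M_{1}$ and $M_{2}$ both have abscissa $\frac{s}{2}>0$, so $L$ is vertical, while both diagonals join a point on the $y$--axis to a point with positive abscissa and so lie on non-vertical lines; hence $Q$ is not a midpoint diagonal quadrilateral unless $M_{1}=M_{2}$, i.e.\ unless $Q$ is a parallelogram --- which is precisely the paper's argument. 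Once $s=v$ is handled separately, your remaining case $S_{1}\parallel S_{3}$ with $s\neq v$ is safe, since the proof of Lemma \ref{L14} never uses $ws-v(t-u)\neq 0$, only $v\neq s$ and (\ref{R1}); there your subtraction argument forcing $u(s-v)=0$ and hence $s=v$ correctly yields a parallelogram.
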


\begin{proof}
Suppose that $Q$\ is a midpoint diagonal quadrilateral which is a trapezoid,
but which is not a parallelogram. By affine invariance, we may assume that $%
Q $ is the trapezoid with vertices $(0,0),(1,0),(0,1)$, and $(1,t),0<t\neq 1$%
; The diagonals of $Q$ are $D_{1}$: $y=tx$ and $D_{2}$: $y=1-x$, and the
midpoints of the diagonals are $M_{1}=\left( \dfrac{1}{2},\dfrac{1}{2}%
\right) $ and $M_{2}=\left( \dfrac{1}{2},\dfrac{1}{2}t\right) $; The open
line segment, $L$, joining $M_{1}$ and $M_{2}$ is contained in the vertical
line $x=\dfrac{1}{2}$; Since the diagonals of $Q$ are nonvertical lines, $%
D_{1}\neq L$ and $D_{2}\neq L$, which implies that $Q$\ is not a midpoint
diagonal quadrilateral.
\end{proof}

Let $D_{1}$ denote the diagonal of $Q$ from lower left to upper right and
let $D_{2}$ denote the diagonal of $Q$ from lower right to upper left. We
note here that there are two types of midpoint diagonal quadrilaterals: Type
1, where $D_{1}=L$ and Type 2, where $D_{2}=L$.

\begin{lemma}
\label{L13}Suppose that $Q$ is both a tangential and a midpoint diagonal
quadrilateral. Then $Q$ is an orthodiagonal quadrilateral.
\end{lemma}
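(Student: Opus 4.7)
The plan is to combine the tangent-length identities for tangential quadrilaterals with the Law of Cosines in the four triangles cut out by the diagonals. Label the vertices as $A = (0,0)$, $B = (0,u)$, $C = (s,t)$, $D = (v,w)$, so that $D_1 = \overline{AC}$ and $D_2 = \overline{BD}$, and let $E$ be their intersection (interior to both segments by convexity). Since $Q$ is tangential, the tangent lengths $a, b, c, d$ from $A, B, C, D$ to the incircle satisfy $|AB| = a+b$, $|BC| = b+c$, $|CD| = c+d$, $|DA| = d+a$, and hence
\[
|AB| - |AD| = |CB| - |CD| = b-d, \qquad |AB| - |CB| = |AD| - |CD| = a-c.
\]

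Consider first the Type 1 case, where $E$ is the midpoint of $BD$ and hence $|BE| = |DE|$. Write $\theta = \angle AEB$; the four angles at $E$ alternate between $\theta$ and $\pi - \theta$. Applying the Law of Cosines to the pairs $(\triangle ABE, \triangle ADE)$ and $(\triangle CBE, \triangle CDE)$, each pair sharing a side through $E$ and having supplementary angles at $E$, yields
\[
|AB|^2 - |AD|^2 = -4|AE||BE|\cos\theta, \qquad |CB|^2 - |CD|^2 = 4|CE||BE|\cos\theta.
\]
Factor the left sides with the first tangential identity as $(b-d)(|AB|+|AD|)$ and $(b-d)(|CB|+|CD|)$. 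If $b = d$, the first equation forces $\cos\theta = 0$ directly. If $b \neq d$, divide the two equations to obtain
\[
\frac{|AB|+|AD|}{|CB|+|CD|} = -\frac{|AE|}{|CE|},
\]
a positive number equal to a negative one, which is absurd unless $\cos\theta = 0$. Either way $D_1 \perp D_2$.

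The Type 2 case, in which $|AE| = |CE|$, is entirely symmetric: regroup the four triangles as $(\triangle ABE, \triangle CBE)$ sharing $BE$ and $(\triangle ADE, \triangle CDE)$ sharing $DE$, use the second tangential identity $|AB| - |CB| = |AD| - |CD|$, and the same positive-versus-negative sign argument again forces $\cos\theta = 0$. The main technical obstacle is the careful bookkeeping of the supplementary angles at $E$ and matching the correct tangential identity to the correct grouping of triangles; once that is in place, the tangential identities are exactly what is needed to convert the Law of Cosines outputs into a sign-determined ratio, producing the contradiction that forces orthogonality.
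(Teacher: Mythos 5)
Your proof is correct, and it takes a genuinely different route from the paper's. The paper works entirely in the normalized coordinates $(0,0),(0,u),(s,t),(v,w)$: it encodes the tangential condition as the vanishing of an explicit polynomial $Z$ (a twice-squared form of the Pitot relation), substitutes the Type 1 or Type 2 value of $u$ from Lemma \ref{L14}, factors the result down to the conditions (\ref{27}) or (\ref{36}), and then verifies directly that each forces the product of the diagonal slopes to be $-1$. You instead argue synthetically: the tangent-length decomposition $|AB|=a+b$, etc., yields the identities $|AB|-|AD|=|CB|-|CD|=b-d$ and $|AB|-|CB|=|AD|-|CD|=a-c$, and pairing the law of cosines across the supplementary angles at the diagonal intersection $E$, together with the midpoint hypothesis $|BE|=|DE|$ (respectively $|AE|=|CE|$), converts these into a sign contradiction forcing $\cos\theta=0$. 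I checked the details: $|AB|^{2}-|AD|^{2}=-4|AE||BE|\cos\theta$ and $|CB|^{2}-|CD|^{2}=4|CE||BE|\cos\theta$ are correct under $|BE|=|DE|$, and your case split closes cleanly --- if $b\neq d$ the left-hand sides are nonzero, so $\cos\theta\neq 0$, the division is legitimate, and the positive-equals-negative ratio rules that case out entirely, while $b=d$ gives $\cos\theta=0$ at once; the Type 2 case is the symmetric regrouping. What your approach buys is a coordinate-free argument that also covers the degenerate parallelogram (rhombus) situation uniformly and exposes the geometric mechanism; what the paper's computation buys is the explicit algebraic orthogonality conditions (\ref{27}) and (\ref{36}) as a byproduct. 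The only polish I would add: say explicitly that convexity places $E$ strictly in the interior of both diagonals, so $|AE|,|BE|,|CE|,|DE|>0$, and that the incircle's points of tangency lie on the closed sides, which is what justifies $|AB|=a+b$ and its companions.
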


\begin{proof}
Since an isometry preserves both tangential and midpoint diagonal
quadrilaterals(a general affine transformation does not suffice), we can
assume that $Q$\ has vertices $(0,0),(0,u),(s,t)$, and $(v,w)$, where $%
s,t,u,v,w$ satisfy (\ref{R0}), (\ref{R1}), and (\ref{R2}). Now $Q$ is
tangential $\iff Z=0$, where 
\begin{eqnarray}
Z &=&\left( \left( v^{2}+w^{2}\right) \left( s^{2}+(t-u)^{2}\right) -\left(
tu-vs-wt\right) ^{2}-u^{2}\left( (s-v)^{2}+(t-w)^{2}\right) \right) ^{2} 
\notag \\
&&-4{\large (}u\left( tu-vs-wt\right) {\large )}^{2}{\large (}%
(s-v)^{2}+(t-w)^{2}{\large )}\text{.}  \label{Z}
\end{eqnarray}

\textbf{Case 1: }$Q$ is a type 1 midpoint diagonal quadrilateral.
Substituting for $u$ in (\ref{Z}) using (\ref{u1}) yields 
\begin{equation*}
Z=\dfrac{-4\left( s-v\right) ^{2}\left( vt-ws\right) ^{2}{\large (}%
v(t^{2}-s^{2})-2wst{\large )}^{2}}{s^{4}}=0
\end{equation*}%
$\iff $%
\begin{equation}
v(t^{2}-s^{2})-2wst=0\text{.}  \label{27}
\end{equation}%
So if $Q$ is both a tangential and a type 1 midpoint diagonal quadrilateral,
then (\ref{27}) holds. The slopes of the diagonals are $m_{1}=\dfrac{t}{s}$
and $m_{2}=\dfrac{w-u}{v}=\allowbreak \dfrac{2ws-vt}{vs}$, which implies
that $m_{1}m_{2}+1=\dfrac{t}{s}\dfrac{2ws-vt}{vs}+1=\allowbreak \dfrac{%
2wts-v(t^{2}-s^{2})}{s^{2}v}=0$ by (\ref{27}).

\textbf{Case 2: }$Q$ is a type 2 midpoint diagonal quadrilateral.
Substituting for $u$ in (\ref{Z}) using (\ref{u2}) yields 
\begin{equation*}
Z=\dfrac{-4v^{2}\left( s-v\right) ^{2}\left( vt-ws\right) ^{2}{\large (}%
2(vs+wt)-(s^{2}+t^{2}){\large )}^{2}}{\left( 2v-s\right) ^{4}}=0
\end{equation*}%
if and only if 
\begin{equation}
2(vs+wt)-(s^{2}+t^{2})=0\text{.}  \label{36}
\end{equation}

So if $Q$ is both a tangential and a type 2 midpoint diagonal quadrilateral,
then (\ref{36}) holds. The slopes of the diagonals are $m_{1}=\dfrac{t}{s}$
and $m_{2}=\dfrac{w-u}{v}=\allowbreak \dfrac{2w-t}{2v-s}$, which implies
that $m_{1}m_{2}+1=\dfrac{t}{s}\dfrac{2w-t}{2v-s}+1=\allowbreak \dfrac{%
2(vs+wt)-(s^{2}+t^{2})}{s\left( 2v-s\right) }=0$ by (\ref{36}).
\end{proof}

\section{Main Result}

\qquad\ The following lemma allows us to express the eccentricity of an
ellipse as a function of the coefficients of an equation of that ellipse.

\begin{lemma}
\label{L4}Suppose that $E_{0}$ is an ellipse with equation $%
Ax^{2}+Bxy+Cy^{2}+Dx+Ey+F=0$; Let $a$ and $b$ denote the lengths of the
semi--major and semi--minor axes, respectively, of $E_{0}$. Then 
\begin{equation}
\dfrac{b^{2}}{a^{2}}=\dfrac{A+C-\sqrt{(A-C)^{2}+B^{2}}}{A+C+\sqrt{%
(A-C)^{2}+B^{2}}}\text{.}  \label{eccab}
\end{equation}
\end{lemma}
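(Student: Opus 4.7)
The plan is to reduce the equation to canonical form by a rigid motion and then read off the ratio of axes. First, I would apply the rotation that diagonalizes the quadratic form $Ax^2 + Bxy + Cy^2$; this is the symmetric bilinear form whose matrix has diagonal entries $A, C$ and off-diagonal entries $B/2$, so its eigenvalues are $\lambda_\pm = \tfrac{1}{2}\bigl((A+C) \pm \sqrt{(A-C)^2 + B^2}\bigr)$. In the rotated coordinates $(x',y')$ the equation becomes $\lambda_+ (x')^2 + \lambda_- (y')^2 + D'x' + E'y' + F = 0$ for some new linear coefficients $D', E'$. Completing the square in each variable (equivalently, translating to the center of the ellipse) eliminates the linear terms and yields $\lambda_+ X^2 + \lambda_- Y^2 = G$ for some constant $G$ depending on $A,\dots,F$.

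Next I would identify the semi-axes. By Lemma \ref{L1} the equation represents a genuine ellipse, so $4AC - B^2 > 0$, which combined with $A, C > 0$ forces $\lambda_\pm > 0$; a parallel computation shows that the nondegeneracy inequality $CD^2 + AE^2 - BDE - F(4AC - B^2) > 0$ forces $G > 0$. Dividing the canonical equation by $G$ produces the standard form $\tfrac{X^2}{G/\lambda_+} + \tfrac{Y^2}{G/\lambda_-} = 1$. Since $\lambda_+ \ge \lambda_-$, the larger denominator is $G/\lambda_-$, so $a^2 = G/\lambda_-$ and $b^2 = G/\lambda_+$. The unknown constant $G$ then cancels in the ratio, giving
\[
\frac{b^{2}}{a^{2}} \;=\; \frac{\lambda_{-}}{\lambda_{+}} \;=\; \frac{A+C - \sqrt{(A-C)^{2}+B^{2}}}{A+C+\sqrt{(A-C)^{2}+B^{2}}},
\]
which is exactly (\ref{eccab}).

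The computation is largely mechanical once one commits to diagonalizing. The step that requires the most care is the sign bookkeeping: one must verify simultaneously that $\lambda_+$, $\lambda_-$, and $G$ are all positive, so that $G/\lambda_\pm$ are genuine squared lengths and so that the assignment of major versus minor axis corresponds to the smaller versus larger eigenvalue. Both sign facts are handed to us by the two inequalities in Lemma \ref{L1}: the discriminant inequality $4AC - B^{2} > 0$ (together with $A+C > 0$) pins down the sign of the eigenvalues, and the second inequality pins down the sign of $G$. With those sign determinations in place, no further calculation is needed beyond the quotient above.
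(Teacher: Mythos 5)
Your proof is correct, but it takes a more self-contained route than the paper. The paper's entire proof consists of quoting from the reference [S] the formulas $a^{2}=\delta\,\tfrac{A+C+\sqrt{(A-C)^{2}+B^{2}}}{2}$, $b^{2}=\delta\,\tfrac{A+C-\sqrt{(A-C)^{2}+B^{2}}}{2}$ with $\delta=4\,\tfrac{(CD^{2}+AE^{2}-BDE)-F(4AC-B^{2})}{(4AC-B^{2})^{2}}$, observing that $\delta>0$ by Lemma \ref{L1}, and dividing. You instead derive these formulas from scratch: diagonalize the quadratic form, translate to the center, and read off $a^{2}=G/\lambda_{-}$, $b^{2}=G/\lambda_{+}$ (which is consistent with the cited formulas, since $\delta=G/(\lambda_{+}\lambda_{-})$). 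What your version buys is independence from the external reference and an explicit accounting of why the normalization $A,C>0$ matters (the stated formula is not invariant under multiplying the equation by $-1$, so some such normalization is implicitly assumed by the lemma); what the paper's version buys is brevity. The only step you leave implicit is the claim that the constant $G$ obtained after completing the square is a positive multiple of $CD^{2}+AE^{2}-BDE-F(4AC-B^{2})$; this is true --- it amounts to the invariance of the determinant of the full $3\times 3$ matrix of the conic under rotation and translation, giving $G=\tfrac{CD^{2}+AE^{2}-BDE-F(4AC-B^{2})}{4AC-B^{2}}$ --- but it is the one computation you should actually display, since the sign of $G$ is what certifies that $G/\lambda_{\pm}$ are genuine squared semi-axes and that the larger one is $a^{2}$.
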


\begin{proof}
By \cite{S}, 
\begin{eqnarray}
a^{2} &=&\delta \dfrac{A+C+\sqrt{(A-C)^{2}+B^{2}}}{2}  \label{absq} \\
b^{2} &=&\delta \dfrac{A+C-\sqrt{(A-C)^{2}+B^{2}}}{2}\text{,}  \notag
\end{eqnarray}

where $\delta =4\dfrac{(CD^{2}+AE^{2}-BDE)-F(4AC-B^{2})}{(4AC-B^{2})^{2}}$;
Note that $\delta >0$ by Lemma \ref{L1}. (\ref{eccab}) then follows
immediately from (\ref{absq}).
\end{proof}

We now state and prove our main result, which gives a geometric
characterization of the unique ellipse of minimal eccentricity inscribed in
a midpoint diagonal quadrilateral.

\begin{theorem}
\label{T1}

(i) There is a unique ellipse of minimal eccentricity, $E_{I}$, \textit{%
inscribed} in a midpoint diagonal quadrilateral, $Q$.

(ii) Furthermore, the smallest non--negative angle between equal conjugate
diameters of $E_{I}$ equals the smallest non--negative angle between the
diagonals of $Q$.
\end{theorem}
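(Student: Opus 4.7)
The plan is to parametrize the inscribed ellipses in $Q$ by $h \in I$ via Proposition \ref{P2}, use Lemma \ref{L4} to write the squared axis ratio as a function of $h$, differentiate via Lemma \ref{L5}, and then reduce the vanishing of the derivative (under the midpoint diagonal hypothesis) to the quadratic $o(h)$ from (\ref{oK}), whose unique root $h_{+}$ in $I$ is already established in Lemma \ref{L10}. With $J(h)$ and $M(h)$ as in (\ref{jm}), Lemma \ref{L4} gives
\[
G(h)\;:=\;\frac{b^{2}(h)}{a^{2}(h)}\;=\;\frac{J(h)-\sqrt{M(h)}}{J(h)+\sqrt{M(h)}},
\]
and since the eccentricity is a strictly decreasing function of $b^{2}/a^{2}$, finding $E_{I}$ reduces to showing $G$ attains its maximum at a unique point of $I$.

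By Lemma \ref{L5}, the critical points of $G$ in $I$ are precisely the zeros of $N(h):=2J'(h)M(h)-J(h)M'(h)$. The computational heart of the argument is to show that, when $Q$ is a midpoint diagonal quadrilateral, $N(h)$ factors as $P(h)\,o(h)$ with $P(h)>0$ on $I$. I would treat the two cases of Lemma \ref{L14} separately: in the type 1 case, substitute $u=(vt-ws)/s$ into (\ref{2}), expand $N(h)$, and factor; in the type 2 case, substitute $u=(vt-ws)/(2v-s)$ and do the same. The fact that $o(h)$ involves only $s,t,v,w$ (no $u$) strongly suggests both substitutions yield the same quadratic factor after the $u$-dependent prefactors are absorbed into $P(h)$. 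Positivity of $P(h)$ on $I$ should then follow by matching $P$ to a product of the $L_{j}(h)$ from Lemma \ref{L3} together with factors such as $(s-2h)(2h-v)$, all of controlled sign on $I$ by (\ref{10})--(\ref{12}).

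Given this factorization, Lemma \ref{L10} supplies a unique $h_{+}\in I$ with $G'(h_{+})=0$. To confirm it is a strict interior maximum, note that $G>0$ on $I$ (since $J^{2}-M=16u(s-v)^{2}R(h)>0$ by (\ref{jmr}) and (\ref{8}), forcing $J>\sqrt{M}$), while at each endpoint of $I$ one has $R(h)=0$ by inspection of (\ref{R3}), hence $J^{2}-M=0$ and $G\to 0$; this proves part (i). For part (ii), the equal conjugate diameters make equal angles $\arctan(b/a)$ with the major axis in principal coordinates, so $\tan(\Gamma/2)=b/a$ and therefore
\[
\tan \Gamma\;=\;\frac{2ab}{a^{2}-b^{2}}\;=\;\frac{\sqrt{4AC-B^{2}}}{\sqrt{(A-C)^{2}+B^{2}}}
\]
by (\ref{absq}) (the $\delta$'s cancel). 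Evaluating the right-hand side at $h=h_{+}$ using (\ref{2}) and either (\ref{u1}) or (\ref{u2}), and comparing with $\tan \alpha=(m_{1}-m_{2})/(1+m_{1}m_{2})$ computed from the diagonal slopes $m_{1}=t/s$ and $m_{2}=(w-u)/v$, should yield the desired identity $\Gamma=\alpha$ (interpreting both as the smallest non-negative angle).

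The obstacles are computational rather than conceptual: (a) the factorization $N(h)=P(h)\,o(h)$ in each of the two midpoint diagonal cases is a substantial polynomial identity demanding careful bookkeeping, and it is the linchpin that makes Lemma \ref{L10} applicable; and (b) the final matching of $\tan \Gamma$ at $h_{+}$ with $\tan \alpha$ must absorb the nested square root in the formula (\ref{oroots}) for $h_{+}$, which is where the midpoint diagonal hypothesis is expected to produce just the right cancellations. The deliberate construction of Lemmas \ref{L6}--\ref{L10}, building up the quadratic $o$ and its root $h_{+}$ purely in terms of $s,t,v,w$, indicates these identities are precisely what the theorem requires.
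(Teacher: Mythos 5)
Your overall strategy is the paper's own: parametrize inscribed ellipses by the abscissa $h$ of the center via Proposition \ref{P2}, set $G(h)=b^{2}/a^{2}=(J-\sqrt{M})/(J+\sqrt{M})$ via Lemma \ref{L4}, reduce $G'=0$ to the quadratic $o(h)$ after substituting the midpoint diagonal condition (\ref{u1}) or (\ref{u2}), invoke Lemma \ref{L10} for the unique critical point $h_{+}$, use the vanishing of $G$ at the endpoints of $I$ to conclude $h_{+}$ is the global maximum, and finally match $\tan 2\theta=2(b/a)/(1-b^{2}/a^{2})$ against $\tan\alpha$. The paper indeed obtains $p(h)=256h\left(\tfrac{s-v}{s}\right)^{4}(vt-ws)^{2}(s-h)\,o(h)$ in the type 1 case, exactly the factorization $N=P\cdot o$ you predict (type 2 is stated to be similar). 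Your coefficient formula $\tan\Gamma=\sqrt{4AC-B^{2}}/\sqrt{(A-C)^{2}+B^{2}}$, with the $\delta$'s cancelling, is a slightly cleaner route to part (ii) than the paper's explicit evaluation of $G(h_{+})$ via (\ref{Jh0})--(\ref{Ghp}), since $4AC-B^{2}$ is already known in closed form from (\ref{29}).

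There is, however, one case your argument does not cover and genuinely needs: $Q$ tangential. Then some inscribed ellipse is a circle, so $A(h_{0})=C(h_{0})$ and $B(h_{0})=0$ for some $h_{0}\in I$, hence $M(h_{0})=0$; Lemma \ref{L5} (which requires $M>0$ on $I$) no longer applies, $\sqrt{M}$ sits in the denominator of your expression for $G'$, and the claim that the critical points of $G$ are precisely the zeros of $N$ fails. The paper disposes of this case separately: the inscribed circle is trivially the unique minimizer, and Lemma \ref{L13} shows a tangential midpoint diagonal quadrilateral is orthodiagonal, so the angle between the diagonals is $\pi/2$, matching the angle between equal conjugate (necessarily perpendicular) diameters of a circle. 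Only after excluding tangential $Q$ may one assert $M>0$ on $I$ as in (\ref{31}). Two smaller omissions: the parallelogram case (permitted by the definition of midpoint diagonal quadrilateral but excluded by the normal form (\ref{R2})) must be quoted from \cite{H2}, and Lemma \ref{L2} is needed to rule out trapezoids before placing $Q$ in that normal form.
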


\begin{remark}
In \cite{H1} we proved that there is a unique ellipse of minimal
eccentricity \textit{inscribed} in any convex quadrilateral, $Q$; The
uniqueness for midpoint diagonal quadrilaterals would then follow from that
result. However, the proof here, specialized for midpoint diagonal
quadrilaterals, is self--contained, uses different methods, and does not
require the result from \cite{H1}.
\end{remark}

\begin{proof}
If $Q$ is a parallelogram, then Theorem \ref{T1} was proven in \cite{H2}.
Now suppose that $Q$\ is \textbf{not }a parallelogram. Then by Lemma \ref{L2}%
, $Q$\ is \textbf{not }a trapezoid. Thus by using an isometry of the plane,
we may assume that $Q$\ has vertices $(0,0),(0,u),(s,t)$, and $(v,w)$, where 
$s,t,u,v$, and $w$ satisfy (\ref{R0}), (\ref{R1}), and (\ref{R2}). If $E_{0}$
is an ellipse inscribed in $Q$, then by Proposition \ref{P2}, the equation
of $E_{0}$ is $A(h)x^{2}+B(h)xy+C(h)y^{2}+D(h)x+E(h)y+F(h)=0$ for some $h\in
I$, where $A(h)$ thru $F(h)$ are given by (\ref{2}). Let $a=a(h)$ and $%
b=b(h) $ denote the lengths of the semi--major and semi--minor axes,
respectively, of $E_{0}$. Since the square of the eccentricity of $E_{0}$
equals $1-\dfrac{b^{2}}{a^{2}}$, it suffices to maximize $\dfrac{b^{2}}{a^{2}%
}$, which is really a function of $h\in I\ $since we allow $E_{0}$\ to vary
over all ellipses inscribed in $Q$; Thus we want to maximize $G(h),h\in I$,
where $G(h)=\dfrac{b^{2}(h)}{a^{2}(h)}$; Using (\ref{eccab}), $G(h)=\dfrac{%
A(h)+C(h)-\sqrt{{\large (}A(h)-C(h){\large )}^{2}+{\large (}B(h){\large )}%
^{2}}}{A(h)+C(h)+\sqrt{{\large (}A(h)-C(h){\large )}^{2}+{\large (}B(h)%
{\large )}^{2}}}=\dfrac{J(h)-\sqrt{M(h)}}{J(h)+\sqrt{M(h)}}$, where $J$ and $%
M$ are given by (\ref{jm}). By Lemma \ref{L5}, 
\begin{equation}
G^{\prime }(h)=\dfrac{p(h)}{\sqrt{M(h)}{\large (}J(h)+\sqrt{M(h)}{\large )}%
^{2}}\text{,}  \label{dG}
\end{equation}

where $p$ is the quartic polynomial given by 
\begin{equation}
p(h)=2J^{\prime }(h)M(h)-J(h)M^{\prime }(h)\text{.}  \label{p}
\end{equation}

\qquad We now prove Theorem \ref{T1} for the case when $Q$ is a type 1
midpoint diagonal quadrilaterals--the proof for Type 2 midpoint diagonal
quadrilaterals being similar. So we now adapt the formulas in (\ref{2}) and
in (\ref{eccab}) to type 1 midpoint diagonal quadrilaterals.

\textbf{Uniqueness:} First we show that there is a unique ellipse of minimal
eccentricity, $E_{I}$, inscribed in $Q$. Then we shall prove the property
about the angle between equal conjugate diameters of $E_{I}$. As earlier,
let $L$ be the line thru the midpoints of the diagonals of $Q$, so that the
equation of $L$ is given by (\ref{9}); Let $m_{1}=\dfrac{t}{s}$ and $m_{2}=%
\dfrac{w-u}{v}$ denote the slopes of $D_{1}$ and $D_{2}$, respectively; By
Lemma \ref{L14}, $Q$ is Type 1 if and only if (\ref{u1}) holds. Substituting
for $u$ in (\ref{p}) using (\ref{u1}) yields $p(h)=\allowbreak 256h\left( 
\dfrac{s-v}{s}\right) ^{4}\left( vt-ws\right) ^{2}\left( s-h\right) o(h)$,
where\ $o$ is given by (\ref{oK}). Note that Lemma \ref{L11} implies that 
\begin{equation}
J(h)+\sqrt{M(h)}>0,h\in I\text{.}  \label{37}
\end{equation}

Now assume first that $Q$ is a tangential quadrilateral. Then $Q$ is an
orthodiagonal quadrilateral by Lemma\textbf{\ \ref{L13}}, and so the
diagonals of $Q$ are perpendicular. Also, there is a unique circle, $\Phi $,
inscribed in $Q$, which implies that $\Phi $ is the unique ellipse of
minimal eccentricity inscribed in $Q$ since $\Phi $ has eccentricity $0$.
Since any pair of perpendicular diameters of a circle are equal conjugate
diameters, the smallest non--negative angle between conjugate diameters of a
circle is $\dfrac{\pi }{2}$. Hence Theorem \ref{T1} holds when $Q$ is a
tangential quadrilateral. So assume now that $Q$ is \textbf{not }a
tangential quadrilateral, which implies that $A(h)\neq C(h)$ for all $h\in I$
and thus $M(h)\neq 0$ for all $h\in I$ by (\ref{jm}); Since $M$ is
non--negative we have 
\begin{equation}
M(h)>0,h\in I\text{.}  \label{31}
\end{equation}

By (\ref{dG}), (\ref{37}), and (\ref{31}), $G$ is differentiable on $I$. By (%
\ref{dG}) and (\ref{p}), 
\begin{equation}
G^{\prime }(h)=\dfrac{256h\left( \dfrac{s-v}{s}\right) ^{4}\left(
vt-ws\right) ^{2}\left( s-h\right) o(h)}{\sqrt{M(h)}{\large (}J(h)+\sqrt{M(h)%
}{\large )}^{2}}\text{.}  \label{dGo}
\end{equation}

By Lemma \ref{L10} and (\ref{dGo}), $h_{+}$ is the unique root of $G^{\prime
}$ in $I$, where $h_{+}$ is given by (\ref{oroots}). Since $G(h)=\dfrac{%
b^{2}(h)}{a^{2}(h)}$, it follows that $G(h)>0$ on $I$; Also, $G\left( \dfrac{%
v}{2}\right) =G\left( \dfrac{s}{2}\right) =0$; Since $G$ is positive in the
interior of $I$ and vanishes at the endpoints of $I$, $h_{+}$ must yield the
global maximum of $G$ on $I$. That proves uniqueness.

\textbf{Angle between equal conjugate diameters: }We now prove that the
smallest non--negative angle between equal conjugate diameters of $E_{I}$
equals the smallest non--negative angle between the diagonals of $Q$. First
we find a simplified formula for $G(h_{+})$; Solving for $h_{+}^{2}$ in the
equation $o(h_{+})=0$ yields%
\begin{equation}
h_{+}^{2}=\dfrac{(s-2h_{0})K}{2\left( s^{2}+t^{2}\right) \left( s-v\right) }%
\text{,}  \label{h0sq}
\end{equation}%
where $K$ is given by (\ref{oK}). Substituting for $u$ again in the formulas
for $A(h)$ and $B(h)$ from (\ref{2}) using (\ref{u1}) and simplifying gives 
\begin{gather}
A(h_{+})+C(h_{+})=4\left( s-v\right) \times  \label{ApC} \\
\left( \dfrac{\left( s-v\right) (t^{2}+s^{2})}{s^{2}}h_{+}^{2}+\dfrac{%
2w\left( vt-ws\right) }{s}h_{+}-\allowbreak w\left( vt-ws\right) \right) 
\notag
\end{gather}%
and 
\begin{gather}
A(h_{+})-C(h_{+})=4\left( s-v\right) \times  \label{AmC} \\
\left( \dfrac{K(t^{2}-s^{2})(s-2h_{+})}{2s^{2}\left( s^{2}+t^{2}\right) }+%
\dfrac{2w\left( vt-ws\right) }{s}h_{+}-\allowbreak w\left( vt-ws\right)
\right) \text{.}  \notag
\end{gather}%
Using (\ref{ApC}) and (\ref{h0sq}) and simplifying then gives 
\begin{equation}
J(h_{+})=\dfrac{2p_{1}\left( s-v\right) }{s^{2}}(s-2h_{+})\text{.}
\label{Jh0}
\end{equation}%
(\ref{h0sq}) also yields 
\begin{equation}
B(h_{+})=-\dfrac{4\left( s-v\right) w{\large (}2wst-(t^{2}-s^{2})v{\large )}%
}{s^{2}+t^{2}}{\large (}s-2h_{+})\text{.}  \label{Bh0}
\end{equation}%
Using (\ref{Bh0}), (\ref{h0sq}), and (\ref{AmC}) gives 
\begin{gather}
M(h_{+})=\dfrac{4p_{1}\left( s-v\right) ^{2}}{(s^{2}+t^{2})s^{4}}\times
\label{Mh0} \\
(s-2h_{+})^{2}\allowbreak {\large (}2wst-(t^{2}-s^{2})v{\large )}^{2}\text{,}
\notag
\end{gather}%
${\large \ }$where $M$ is given by (\ref{jm}). Note that $%
2wst-(t^{2}-s^{2})v\neq 0$ since $M(h_{+}\allowbreak )>0$ by (\ref{31});
Also, $h_{+}\in I$ implies that $(s-v)(2h_{+}-s)<0$; Thus (\ref{Mh0}) yields 
\begin{equation}
\sqrt{M(h_{+})}=\dfrac{2\sqrt{p_{1}}\left( s-v\right) \left\vert
2wst-(t^{2}-s^{2})v\right\vert (s-2h_{+})}{s^{2}\sqrt{s^{2}+t^{2}}}\text{.}%
\allowbreak  \label{sqMh0}
\end{equation}%
By (\ref{Jh0}) and (\ref{sqMh0}) we have 
\begin{gather}
\left( J(h_{+}\allowbreak )+\sqrt{M\left( h_{+}\right) }\right) ^{2}=\dfrac{%
4p_{1}\left( s-v\right) ^{2}(s-2h_{+})^{2}}{s^{4}}\times  \label{24} \\
\left( \sqrt{p_{1}}+\dfrac{\allowbreak \left\vert
2wst-(t^{2}-s^{2})v\right\vert }{\sqrt{s^{2}+t^{2}}}\right) ^{2}\text{.} 
\notag
\end{gather}%
(\ref{Jh0}) and (\ref{Mh0}) imply that 
\begin{gather}
J^{2}(h_{+}\allowbreak )-M\left( h_{+}\right) =\dfrac{4p_{1}\left(
s-v\right) ^{2}(s-2h_{+})^{2}}{s^{4}}\times  \label{25} \\
\left( p_{1}-\dfrac{{\large (}2wst-(t^{2}-s^{2})v{\large )}^{2}}{s^{2}+t^{2}}%
\right) \text{.}  \notag
\end{gather}%
By (\ref{24}) and (\ref{25}) we have $G\left( h_{+}\right) \allowbreak =%
\dfrac{J^{2}(h_{+}\allowbreak )-M\left( h_{+}\right) }{{\large (}%
J(h_{+}\allowbreak )+\sqrt{M\left( h_{+}\right) }{\large )}^{2}}=$

$\dfrac{p_{1}-\tfrac{{\large (}2wst-(t^{2}-s^{2})v{\large )}^{2}}{s^{2}+t^{2}%
}}{\left( \sqrt{p_{1}}+\tfrac{\allowbreak \left\vert
2wst-(t^{2}-s^{2})v\right\vert }{\sqrt{s^{2}+t^{2}}}\right) ^{2}}$, which
implies that 
\begin{gather}
G\left( h_{+}\right) \allowbreak =  \label{Ghp} \\
\dfrac{\sqrt{s^{2}+t^{2}}\sqrt{p_{1}}-\allowbreak \left\vert
2wst-(t^{2}-s^{2})v\right\vert }{\sqrt{s^{2}+t^{2}}\sqrt{p_{1}}+\allowbreak
\left\vert 2wst-(t^{2}-s^{2})v\right\vert }\text{.}  \notag
\end{gather}%
$\allowbreak $

Now let $\alpha $ denote the smallest non--negative angle between the
diagonals of $Q$, so that $0\leq \alpha \leq \dfrac{\pi }{2}$, and let $%
m_{1}=$ $\dfrac{t}{s}$ and $m_{2}=\dfrac{w-u}{v}$ denote the slopes of the
diagonals of $Q$; Substituting for $u$ using (\ref{u1}) yields $\dfrac{%
m_{2}-m_{1}}{1+m_{1}m_{2}}=\dfrac{\allowbreak 2s\left( vt-ws\right) }{%
(t^{2}-s^{2})v-2wts}$; Using the formula $\tan \alpha =\left\vert \dfrac{%
m_{2}-m_{1}}{1+m_{1}m_{2}}\right\vert $ then yields, by (\ref{R1}), 
\begin{equation}
\tan \alpha =\dfrac{\allowbreak 2s\left( vt-ws\right) }{\left\vert
(t^{2}-s^{2})v-2wts\right\vert }\text{.}  \label{34}
\end{equation}

Now let $\tau _{1}$ and $\tau _{2}$ denote a pair of \textit{equal}
conjugate diameters of any ellipse, $E_{0}$; Let $\theta $ denote the acute
angle going counterclockwise from the major axis of $E_{0}$ to one of the
equal conjugate diameters, and let $\Gamma =$ angle between the \textit{equal%
} conjugate diameters of $E_{0},0\leq \Gamma \leq \pi $; If $a$ and $b$ are
the lengths of the semi--major and semi--minor axes, respectively, of $E_{0}$%
, then it is known that $\tau _{1}$ and $\tau _{2}$ make \textit{equal acute
angles}, on \textit{opposite} sides, with the major axis of $E_{0}$; Thus $%
\Gamma =2\theta $ and $\tan \theta =\dfrac{b}{a}$(see page 170 of \cite{SA}%
). Note that Salmon refers to $\theta $ as the angle $\tau _{1}$ or $\tau
_{2}$ makes with the axis of $x$; But Salmon is assuming the ellipse has
major axis $=x$ axis; So $\theta $ is really the angle with the major axis
of $E_{0}$; Assume now for the rest of the proof that $E_{0}=E_{I}$, the
unique ellipse of minimal eccentricity inscribed in $Q$; We then want to
show that $\alpha =\Gamma $, which is equivalent to showing that $\tan
2\theta =\tan \alpha $, which in turn is equivalent to showing that $\tan
^{2}2\theta =\tan ^{2}\alpha $ since we are assuming that $\theta $ and $%
\alpha $ lie in the first quadrant. Now $\tan 2\theta =\dfrac{2\tan \theta }{%
1-\tan ^{2}\theta }=\dfrac{2\tfrac{b}{a}}{1-\tfrac{b^{2}}{a^{2}}}$, which
implies that $\tan ^{2}2\theta =\dfrac{4\left( \tfrac{b^{2}}{a^{2}}\right) }{%
\left( 1-\tfrac{b^{2}}{a^{2}}\right) ^{2}}$; As shown above in the first
part of the proof, $h_{+}$ must yield the global maximum of $G$ on $I$. $%
G(h_{+})=\dfrac{b^{2}}{a^{2}}$ implies that 
\begin{equation}
\tan ^{2}2\theta =\dfrac{4G(h_{+})}{{\large (}1-G(h_{+}){\large )}^{2}}\text{%
.}  \label{32}
\end{equation}

Thus we must show that 
\begin{equation}
\dfrac{4G(h_{+})}{{\large (}1-G(h_{+}){\large )}^{2}}=\left( \dfrac{2s\left(
vt-ws\right) }{(t^{2}-s^{2})v-2wts}\right) ^{2}\text{.}  \label{33}
\end{equation}

Using (\ref{Ghp}), $1-G\left( h_{+}\right) =\dfrac{2\allowbreak \left\vert
2wst-(t^{2}-s^{2})v\right\vert }{\sqrt{s^{2}+t^{2}}\sqrt{p_{1}}+\left\vert
2wst-(t^{2}-s^{2})v\right\vert }$, which implies that 
\begin{gather}
{\large (}1-G(h_{+}){\large )}^{2}=  \label{38} \\
\dfrac{4{\large (}2wst-(t^{2}-s^{2})v{\large )}^{2}}{{\large (}\sqrt{%
s^{2}+t^{2}}\sqrt{p_{1}}+\left\vert 2wst-(t^{2}-s^{2})v\right\vert {\large )}%
^{2}}\text{.}  \notag
\end{gather}%
(\ref{33}) then follows from (\ref{Ghp}) and (\ref{38}).
\end{proof}

\begin{remark}
We do not know if it is possible for the second part of Theorem \ref{T1} to
hold when $Q$ is not a midpoint diagonal quadrilateral. If there were a
quadrilateral, $Q$, such that $Q$ is both a tangential and an orthodiagonal
quadrilateral, but not a midpoint diagonal quadrilateral, then it would
follow easily that the second part of Theorem \ref{T1} holds. However, it is
not hard to show that if $Q$ is both a tangential and an orthodiagonal
quadrilateral, then $Q$ must be a midpoint diagonal quadrilateral.
\end{remark}

\begin{remark}
Suppose that $Q$ is a Type 2 midpoint diagonal quadrilateral. If one
reflects $Q$ thru the $y$ axis one obtains a Type 1 midpoint diagonal
quadrilateral. Thus it might appear that one need only prove Theorem \ref{T1}
for Type 1 midpoint diagonal quadrilaterals. However, in the proof above we
also assumed that $Q$ has vertices $(0,0),(0,u),(s,t)$, and $(v,w)$.
Reflection thru the $y$ axis does not preserve the form of those vertices.
\end{remark}

\section{Example\textbf{\ }}

Suppose that $Q$\ has vertices $(0,0),(0,u),(s,t)$, and $(v,w)$, where $s=4$%
, $t=6$, $v=2$, $w=1$, and $u=\allowbreak 2$; Then $s,t,u,v,w$ satisfy (\ref%
{R0}), (\ref{R1}), and (\ref{R2}), and $I=(1,2)$; $Q$ is a type 1 midpoint
diagonal quadrilateral since $u=\allowbreak \dfrac{vt-ws}{s}$; $\tan \mu =%
\dfrac{\allowbreak 2s\left( vt-ws\right) }{\left\vert
(t^{2}-s^{2})v-2wts\right\vert }=\allowbreak 8$; $o(h)=\allowbreak
16(-13h^{2}-18h+36)$, which has roots $\dfrac{3}{13}(-3\pm \sqrt{61})$; $%
h_{+}=\dfrac{3}{13}(-3+\sqrt{61})$ and $\dfrac{4G(h_{+})}{{\large (}%
1-G(h_{+}){\large )}^{2}}=\dfrac{4\left( \tfrac{33-\sqrt{65}}{32}\right) }{%
\left( 1-\tfrac{33-\sqrt{65}}{32}\right) ^{2}}=\allowbreak \allowbreak
64=\tan ^{2}\mu $; The equation of $E_{I}$ is 
\begin{gather*}
(35-3\sqrt{61}){\large (}29x^{2}-4xy+36y^{2}) \\
+48(\allowbreak 72-11\sqrt{61})(x+2y) \\
+16(887-105\sqrt{61})=0\text{.}
\end{gather*}

\end{document}